\newtheorem{theorem}{Theorem}[section]
\newtheorem{utv*}{Proposition}
\newtheorem{hyp*}{Conjecture}
\newtheorem{lemma}[theorem]{Lemma}
\newtheorem{corollary}[theorem]{Corollary}
\newtheorem{remark}{Remark}
\newtheorem*{th*}{Theorem}
\newtheorem{prop}{Proposition}
\def\sli{\sum\limits}
\def\ili{\int\limits}
\def\la{\lambda}
\def\R{\mathbb{R}}
\def\ep{\varepsilon}
\def\vf{\varphi}
\newcommand{\ci}[1]{_{ {}_{\scriptstyle #1}}}
\newcommand{\1}{\mathbf{1}}
\newcommand{\C}{\mathbb{C}}
\newcommand{\ve}{\varepsilon}
\def\cyr{\fontencoding{OT2}\fontfamily{wncyr}\selectfont}
\DeclareTextFontCommand{\textcyr}{\cyr}
\renewcommand{\Im}{\textup{Im}}
\newcounter{vremennyj}
\begin{document}

		\title[Cauchy independent measures and super-additivity of analytic capacity]{Cauchy independent measures and super-additivity of analytic capacity}

\author{Alexander Reznikov}
\address{Alexander Reznikov, , Michigan State University, East Lansing, Michigan, USA}

\author{Alexander Volberg}
\address{Alexander Volberg, Michigan State University, East Lansing, Michigan, USA}

\thanks{ Alexander Volberg \ was partially supported by the U.S.\ NSF grant DMS-0758552}

\begin{abstract}
We show that, given a family of discs centered at a nice curve, the analytic capacities of arbitrary subsets of these discs add up. However we need that the discs in question would be slightly separated, and it is not clear whether the separation condition is essential or not. We apply this result to study the independence of Cauchy integral operators.
\end{abstract}

\maketitle

\section{Introduction}

We call a finite Borel measure with compact support on the plane {\it Cauchy operator} measure if the Cauchy operator $\mathcal{C}_\mu$ is bounded in $L^2(\mu)$.
We call the collection of measures $\{\mu_j\}_{j=1}^\infty$as above {\it  $C$-Cauchy independent measures} if a) $\|\mathcal{C}_{\mu_j}\|_{\mu_j} \le 1$ and b) for $\mu=\Sigma_j \mu_j$ the following holds $\|\mathcal{C}_\mu\|_\mu \le C<\infty$.

We skip $C$ prefix when it will be clear from context and call such families {\it Cauchy independent}.

Notice that nobody forbids to think that $\mu_j=0$ starting with a certain place. Then we have a finite family of measures. Finite family is always Cauchy independent (but the constant $C$ may grow). So two measures are always Cauchy independent with absolute constant $C$. This is not entirely trivial. We prove it below. But our main interest is in situations, when infinite families are independent.

As always by $\gamma(F)$ we denote the {\it analytic capacity} of $F$.

\subsection{Two Cauchy operator measures are Cauchy independent with absolute constant}
\label{two}

The proof of the following lemma is borrowed from \cite{NToV1}.
\begin{lemma}
\label{proposuma}
Let $\mu$ and $\sigma$ be Borel measures with growth of degree $1$ in $\R^2$ such that $\mathcal{C}_\mu$ is bounded in $L^2(\mu)$ and $\mathcal{C}_\sigma$ is bounded in $L^2(\sigma)$. We assume that their norms are at most $1$.Then, $\mathcal{C}_{\mu+\sigma}$
is bounded in $L^2(\mu+\sigma)$ with norm at most $C_0$, where $C_0$ is an absolute constant.
\end{lemma}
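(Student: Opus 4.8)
The plan is to set $\nu=\mu+\sigma$, observe that $\nu$ again has linear growth of degree $1$ (with constant at most the sum of the two), and reduce $L^2(\nu)$-boundedness of $\mathcal{C}_\nu$ to a single local testing condition. Concretely, I would invoke the nonhomogeneous $T1$ theorem of Nazarov--Treil--Volberg: for a linear-growth measure the Cauchy operator is bounded on $L^2(\nu)$ once one controls $\int_B |\mathcal{C}^\epsilon_\nu\chi_B|^2\, d\nu \lesssim \nu(B)$ uniformly in the truncation $\epsilon$ and over all discs $B$, together with the dual testing and a weak boundedness property. Because the Cauchy kernel is antisymmetric, $\mathcal{C}_\nu^\ast=-\mathcal{C}_\nu$, so the dual testing coincides with the direct one and the weak boundedness property is automatic; thus everything reduces to the single inequality above.

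Next I would expand the square, writing $\mathcal{C}^\epsilon_\nu\chi_B=\mathcal{C}^\epsilon_\mu\chi_B+\mathcal{C}^\epsilon_\sigma\chi_B$ and $d\nu=d\mu+d\sigma$. This is the quadratic-form avatar of the heuristic that, in the splitting $L^2(\nu)\simeq L^2(\mu)\oplus L^2(\sigma)$, the operator $\mathcal{C}_\nu$ is a $2\times 2$ matrix whose diagonal blocks $\mathcal{C}_{\mu\to\mu},\mathcal{C}_{\sigma\to\sigma}$ have norm $\le 1$ by hypothesis. The ``pure'' pieces $\int_B|\mathcal{C}^\epsilon_\mu\chi_B|^2\,d\mu$ and $\int_B|\mathcal{C}^\epsilon_\sigma\chi_B|^2\,d\sigma$ are immediately dominated by $\mu(B)$ and $\sigma(B)$ from the individual $L^2$ bounds, and the whole problem funnels into the two cross testing integrals $\int_B|\mathcal{C}^\epsilon_\mu\chi_B|^2\,d\sigma$ and $\int_B|\mathcal{C}^\epsilon_\sigma\chi_B|^2\,d\mu$, i.e.\ into bounding the off-diagonal block $\mathcal{C}_{\mu\to\sigma}\colon L^2(\mu)\to L^2(\sigma)$.

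The hard part is precisely these cross terms, and the obstacle is genuine. A naive Schur/linear-growth estimate of $\int_B|\mathcal{C}^\epsilon_\mu\chi_B|^2\,d\sigma$ loses a logarithm because it discards the cancellation of the kernel, while passing the estimate through duality merely trades it for the symmetric quantity $\int_B|\mathcal{C}^\epsilon_\sigma(\fdot)|^2\,d\mu$, so the two cross terms reference one another circularly. To break this I would use the $L^2(\mu)$-boundedness of $\mathcal{C}_\mu$ in full strength: Cotlar's inequality upgrades it to a bound on the maximal transform $\mathcal{C}^\ast_\mu$ on $L^2(\mu)$, and the Calder\'{o}n--Zygmund decomposition behind the weak-$(1,1)$ inequality uses only linear growth of the \emph{target}, so it yields the transferred distributional estimate $\sigma(\{z:\mathcal{C}^\ast_\mu\chi_B(z)>\lambda\})\lesssim \mu(B)/\lambda$ with absolute constant. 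Converting this into the $L^2(\sigma)$ testing bound, rather than a logarithmically divergent one, is where a good-$\lambda$ inequality or the suppressed-kernel $Tb$ argument of NTV must enter, and the circular dependence is handled cleanly by first truncating so that all quantities are a priori finite and then iterating. I expect this transference of the weak-type control from $\mu$ to the foreign measure $\sigma$, with constants independent of both, to be the main technical obstacle. For the Cauchy kernel specifically one could instead route through Melnikov--Tolsa curvature, expanding $c^2(\nu)$ into its eight mixed triple integrals and bounding the mixed ones by the pure curvatures and linear growth; this sidesteps the circularity but is special to two dimensions.
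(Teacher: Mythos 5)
Your reduction to testing conditions correctly isolates the cross terms as the crux, but the proposal never closes them, and the one concrete mechanism you offer for them is unsound. You assert that the transferred weak-type bound $\sigma(\{z:\mathcal{C}^\ast_\mu\chi_B(z)>\lambda\})\lesssim \mu(B)/\lambda$ follows from the $L^2(\mu)$-boundedness of $\mathcal{C}_\mu$ because the Calder\'{o}n--Zygmund decomposition ``uses only linear growth of the target.'' It does not: in the nonhomogeneous weak-$(1,1)$ argument the good part of the decomposition is a bounded function times $\mu$, and estimating its Cauchy transform in the measure $\sigma$ requires precisely the cross operator $L^2(\mu)\to L^2(\sigma)$ you set out to bound, so the circularity you describe is re-entered rather than avoided. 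The estimate you want is true, but for a different reason, and this is the idea the paper's proof turns on: Theorem 9.1 of \cite{NTrV1} says that $L^2(\sigma)$-boundedness of $\mathcal{C}_\sigma$ alone upgrades to boundedness of $\mathcal{C}$ from all of $M(\R^2)$ into $L^{1,\infty}(\sigma)$ --- the input may be an \emph{arbitrary} finite measure, not just one of the form $f\,d\sigma$. Applied to $\nu=\chi_B\,d\mu$ this gives your display, with the hypothesis on $\sigma$, not on $\mu$, doing the work. Better still, once the weak bounds are stated in this input-free form they simply add: $\mu\{|\mathcal{C}_\ve\nu|>\lambda\}\le c\|\nu\|/\lambda$ and $\sigma\{|\mathcal{C}_\ve\nu|>\lambda\}\le c\|\nu\|/\lambda$ yield $(\mu+\sigma)\{|\mathcal{C}_\ve\nu|>\lambda\}\le 2c\|\nu\|/\lambda$, so $\mathcal{C}_{\mu+\sigma}$ is of weak type $(1,1)$ with respect to $\mu+\sigma$ with an absolute constant --- no $T1$ testing, no expansion of the square, no cross terms at all.

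The second missing ingredient is the passage back from weak $(1,1)$ to $L^2$. You gesture at ``a good-$\lambda$ inequality or the suppressed-kernel $Tb$ argument \dots truncating and iterating,'' but this step is a citable theorem, not something to improvise: Theorem 10.1 of \cite{NTrV1} states that for an antisymmetric Calder\'{o}n--Zygmund kernel the weak-$(1,1)$ bound with respect to the underlying measure self-improves, by interpolation, to $L^2$-boundedness (alternatively via the good-$\lambda$ argument in \cite{Tolsa-book}); this is exactly where the logarithmic loss you rightly fear is circumvented, and it is the black box your plan lacks. With these two imported theorems the paper's proof is four lines long. Your curvature fallback is likewise only a gesture: controlling the mixed triple integrals in $c^2(\mu+\sigma)$ by the pure curvatures plus linear growth is not an expand-and-estimate exercise --- the mixed curvature can be positive while both pure curvatures vanish (two transversal segments), and establishing the mixed bound in general is essentially equivalent to the lemma itself, going in the literature through the same NTV-type machinery rather than around it.
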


\begin{proof}
The boundedness of $\mathcal{C}_\mu$ in $L^2(\mu)$ implies the boundedness of $\mathcal{C}$ from the space of real measures
$M(\R^{2})$ into $L^{1,\infty}(\mu)$. In other words, the following inequality holds for
any $\nu\in M(\R^{2})$ uniformly on $\ve>0$:
$$\mu\bigl\{x\in\R^{2}:|\mathcal{C}_{\ve}\nu(x)|>\lambda\bigr\}\leq c\,\frac{\|\nu\|}\lambda
\qquad\mbox{for all $\lambda>0$.}$$
For the proof, see Theorem 9.1 of \cite{NTrV1}.
Analogously, the same bound holds with $\mu$ replaced by $\sigma$. As a consequence, we infer that for all $\lambda>0$,
$$(\mu+\sigma)\bigl\{x\in\R^{2}:|\mathcal{C}_{\ve}\nu(x)|>\lambda\bigr\}\leq 2c\,\frac{\|\nu\|}\lambda.$$
That is,  $\mathcal{C}$ is bounded from $M(\R^2)$ into $L^{1,\infty}(\mu+\sigma)$. In particular,
$\mathcal{C}_{\mu+\sigma}$ is of weak type $(1,1)$ with respect to $\mu+\sigma$.
This implies that $\mathcal{C}_{\mu+\sigma}$ is bounded in $L^2(\mu+\sigma)$. For the proof, based
on interpolation, see Theorem 10.1 of \cite{NTrV1} (an alternative argument based on a good lambda
inequality can be also found in Chapter 2 of the book \cite{Tolsa-book}).
\end{proof}

\subsection{Cauchy independence of infinite families of Cauchy operator measures}
\label{inf}

The main result is the following

\begin{theorem}
\label{main2}
Let $\mu=\Sigma_j \mu_j$ be as above, and we assume that measures $\mu_j$ are supported on compacts $E_j$ lying in the discs $D_j$ such that $20D_j$ are disjoint. We also assume that measures $\mu_j$ are {\it extremal} in the sense that $\|\mathcal{C}_{\mu_j}\|_{\mu_j} \le 1$ and $\|\mu_j\|\asymp \gamma(E_j)$.
Let $E=\cup_j E_j$. Then this family is Cauchy independent if and only if for any disc $B$,
\begin{equation}
\label{mainc}
\mu(B) \le C_0 \gamma (B\cap E)\,.
\end{equation}
\end{theorem}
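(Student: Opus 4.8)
The plan is to prove the two implications separately; the necessity of \eqref{mainc} is soft, while its sufficiency carries essentially all the content. \emph{Necessity.} Assume the family is Cauchy independent, so $\|\mathcal{C}_\mu\|_{L^2(\mu)}\le C$. Since $\supp\mu\subset E$ we have $\mu(B)=\mu(B\cap E)$ for every disc $B$. The restriction $\mu|_B$ still has linear growth of degree $1$ and satisfies $\|\mathcal{C}_{\mu|_B}\|_{L^2(\mu|_B)}\le\|\mathcal{C}_\mu\|_{L^2(\mu)}\le C$, because passing to a subset cannot increase the norm. After dividing by a suitable absolute constant, $\mu|_B$ becomes an admissible competitor in Tolsa's description of the capacity $\gamma_+$, and hence
\[
\mu(B)=\mu(B\cap E)\lesssim\gamma_+(B\cap E)\asymp\gamma(B\cap E),
\]
the last comparison being Tolsa's theorem $\gamma\asymp\gamma_+$ (see \cite{Tolsa-book}). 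This is precisely \eqref{mainc}.

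\emph{Sufficiency.} Now assume \eqref{mainc}. I would establish boundedness of $\mathcal{C}_\mu$ in $L^2(\mu)$ through the curvature characterization: for a measure of linear growth, $\mathcal{C}_\mu$ is bounded in $L^2(\mu)$ if and only if the Menger curvature is a Carleson functional,
\[
c^2(\chi_B\mu)=\iiint_{B^3}\frac{d\mu(x)\,d\mu(y)\,d\mu(z)}{R(x,y,z)^2}\le C\,\mu(B)
\quad\text{for every disc }B,
\]
$R(x,y,z)$ being the circumradius of the triangle $xyz$ (the equivalence between this weak-type/curvature bound and $L^2$-boundedness is the one already invoked in the proof of Lemma \ref{proposuma}, cf.\ \cite{NTrV1,Tolsa-book}). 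Linear growth of $\mu$ is immediate from the hypothesis: $\mu(B)\le C_0\,\gamma(B\cap E)\le C_0\,\gamma(B)=C_0\,r(B)$.

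It remains to verify the curvature bound, and this is the heart of the matter. Writing $\chi_B\mu=\sum_j\chi_B\mu_j$ and expanding $c^2(\chi_B\mu)$ I would split the triple integral according to how many distinct discs the points $x,y,z$ occupy. The fully \emph{diagonal} triples, with $x,y,z$ all in one $E_j$, sum to $\sum_j c^2(\chi_B\mu_j)\lesssim\sum_j\mu_j(B)=\mu(B)$, since each extremal $\mu_j$ is itself a Cauchy operator measure. The \emph{off-diagonal} triples, where at least two of $x,y,z$ lie in different discs, are the main obstacle. Here the separation $20D_j\cap 20D_k=\emptyset$ forces the relevant distances to be comparable to the distances between the discs, so that the circumradius is large and $R(x,y,z)^{-2}$ is small; quantitatively one bounds the cross contribution of a pair $D_j,D_k$ by the curvature of the point-mass measure $\nu=\sum_k\|\mu_k\|\,\delta_{c_k}$ (with $c_k$ the center of $D_k$), up to a summable smoothing error of size $\lesssim\|\mu_k\|\,r_k/\dist(\,\cdot\,,D_k)^2$. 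Using $\|\mu_k\|\asymp\gamma(E_k)$ together with \eqref{mainc} — and the super-additivity of analytic capacity established in the companion result — the measure $\nu$ has linear growth and its curvature obeys the same Carleson bound, so the off-diagonal contribution is again $\lesssim\mu(B)$.

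The main obstacle is therefore the off-diagonal analysis: one must show that the separation of the discs converts the long-range Cauchy interactions into a bounded operator, and it is exactly here that the packing condition \eqref{mainc} enters, through the comparison measure $\nu$, to absorb the cross terms. The delicate book-keeping is to sum the smoothing errors and the point-mass curvature against $\mu(B)$ using linear growth and \eqref{mainc}; once this is done, combining the diagonal and off-diagonal bounds yields the Carleson curvature estimate, and hence the boundedness of $\mathcal{C}_\mu$.
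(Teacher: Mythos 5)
Your necessity argument is sound and essentially the standard one the paper takes for granted: restricting $\mu$ to a disc $B$ does not increase the operator norm, and a suitably normalized $\mu|_B$ is admissible for $\gamma_+(B\cap E)\asymp\gamma(B\cap E)$. No complaints there.

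The sufficiency half, however, has a genuine gap: the off-diagonal curvature estimate --- which you yourself identify as the heart of the matter --- is asserted rather than proved, and the one tool you invoke for it does not apply. The claim that the point-mass measure $\nu=\sum_k\|\mu_k\|\,\delta_{c_k}$ satisfies the Carleson bound $c^2(\chi_B\nu)\lesssim\mu(B)$ is, by the Melnikov--Verdera identity together with the comparison Theorem \ref{comp}, essentially equivalent to the $L^2(\mu)$-boundedness you are trying to establish; so your reduction is circular unless an independent proof of that bound is supplied. You propose to extract it from ``the super-additivity of analytic capacity established in the companion result,'' but Theorem \ref{superth} (and Corollary \ref{circle}) require the discs to be $\lambda$-separated with centers on a line or a circle, whereas in Theorem \ref{main2} the $D_j$ are arbitrary separated discs in the plane; the paper even records in Section \ref{q} that the case $\lambda=1$ on a line is open, so there is no general super-additivity to quote. (A smaller flaw: $\nu$ does not have linear growth, since $\nu(B(c_k,\e))=\|\mu_k\|$ for every $\e>0$, so any comparison with $\mu$ must be truncated at the scales $r_k$, which your ``smoothing error'' does not make precise.) The paper's actual route is built precisely to avoid this obstruction: it replaces each $E_j$ by a union of crosses $L_j$ with $\gamma(L_j)\sim H^1(L_j)\sim\gamma(E_j)$, proves the \emph{local} super-additivity $\gamma(\cup_j L_j\cap B)\geqslant c\sum_j\gamma(L_j\cap B)$ by the two-case analysis (the analogues of Lemmas \ref{firstcase} and \ref{secondcase}, where \eqref{mainc} enters through $\sum_{D_j\subset B}\gamma(L_j)\lesssim\mu(B)\le C_0\,\gamma(E\cap B)$, where Theorem \ref{superth} is applied only after re-centering the boundary discs $\hat{D_j}$ on $\partial B$, and where the discs strictly inside $B$ are handled via the extremal measure for $\gamma(F)$, the good indices, and Theorem \ref{comp}); it then deduces boundedness of the Cauchy transform of $H^1|L$ from the criterion of Theorem \ref{h1} and transfers it to $\mu$ by Theorem \ref{comp}. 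Some substitute for this machinery must be supplied at the point where your sketch says ``the delicate book-keeping''; as written, the proposal reduces the theorem to an unproved statement of comparable depth.
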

\begin{remark}
We mention that the condition $ \mu(B) \le C_0 \gamma(B\cap\cup_j E_j)$ for all $\mbox{ball} \; B$ is necessary, but not sufficient. We show this in Section \ref{sh} of this paper.  
\end{remark}

First we prove the following independence theorem.

\begin{theorem}
\label{main1}
Let $\mu=\Sigma_j \mu_j$ be as above, and we assume that measures $\mu_j$ are supported on compacts $E_j$ lying in the discs $D_j$ such that $20D_j$ are disjoint. We also assume that measures $\mu_j$ are {\it extremal} in the sense that $\|\mathcal{C}_{\mu_j}\|_{\mu_j} \le 1$ and $\|\mu_j\|\asymp \gamma(E_j)$. Then this family is Cauchy independent if for any disc $B$, $\sum_j \gamma(B\cap E_j) \le C_0 \gamma(B\cap\cup_j E_j)$.
\end{theorem}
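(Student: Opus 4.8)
The plan is to prove $L^2(\mu)$-boundedness of $\mathcal{C}_\mu$ through the nonhomogeneous $T1$ theorem for the Cauchy kernel together with the Melnikov--Verdera curvature identity (as developed in \cite{NTrV1} and \cite{Tolsa-book}): for a measure of linear growth, $\mathcal{C}_\mu$ is bounded on $L^2(\mu)$ if and only if $\mu(B(x,r))\le Cr$ for all $x,r$ and the local Menger curvature is controlled, $c^2(\mu\llcorner B)\le C\mu(B)$ for every disc $B$, where $c^2(\nu)=\iiint R(x,y,z)^{-2}\,d\nu\,d\nu\,d\nu$ and $R$ is the circumradius. So the entire task reduces to verifying linear growth and the local curvature bound for $\mu=\sum_j\mu_j$.

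For linear growth I would fix a disc $B$ of radius $r$, write $\mu(B)=\sum_j\mu_j(B)$, and separate the discs meeting $B$ according to whether $r(D_j)<r$ or $r(D_j)\ge r$. The disjointness of the dilates $20D_j$ forces \emph{at most one} disc of the second type to meet $B$: if two such discs met $B$, comparing $|z_i-z_j|\ge 20(r_i+r_j)$ with $|z_i-z_j|\le r_i+r_j+2r$ would give a contradiction, so this disc contributes at most $\gamma(B)\asymp r$. For the small discs, extremality gives $\mu_j(B)\le\|\mu_j\|\asymp\gamma(E_j)$, and summing the hypothesis $\sum_j\gamma(E_j\cap CB)\le C_0\gamma(CB\cap E)\le C_0\gamma(CB)\asymp r$ closes the estimate. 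I expect this step to be essentially routine.

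The curvature bound is the heart of the matter. Expanding $c^2(\mu\llcorner B)=\sum_{i,j,k}\iiint_{(E_i\cap B)\times(E_j\cap B)\times(E_k\cap B)}R^{-2}$, the diagonal terms $i=j=k$ contribute $\sum_j c^2(\mu_j\llcorner B)$, and since each $\mathcal{C}_{\mu_j}$ is bounded with norm $\le 1$ the same characterization gives $c^2(\mu_j\llcorner B)\le C\mu_j(B)$, so the diagonal is $\le C\mu(B)$ with no use of the hypothesis. For the off-diagonal terms (at least two distinct indices) the separation is what makes the kernel tame: whenever two of the three points lie in distinct $E_i,E_j$, the circumradius is comparable to the separated pairwise distance, which dominates the internal scales $r_i,r_j$, so one may replace each $\mu_j\llcorner B$ by the point mass $\mu_j(B)\,\delta_{z_j}$ at the center $z_j$ up to a bounded multiplicative error. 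This reduces the off-diagonal sum to $c^2(\nu_B)$ for the discretized measure $\nu_B=\sum_j\mu_j(B)\,\delta_{z_j}$.

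The main obstacle is then to prove $c^2(\nu_B)\le C\mu(B)=C\|\nu_B\|$, and this is exactly where the hypothesis $\sum_j\gamma(E_j\cap B)\le C_0\gamma(B\cap E)$ must be spent. Linear growth and separation alone cannot suffice, since a two-dimensional $20$-separated net of discs has $c^2(\nu_B)/\|\nu_B\|$ blowing up — and, revealingly, such a net is precisely a configuration for which super-additivity fails. The natural route is to invoke Tolsa's description of analytic capacity through curvature and the comparability $\gamma\approx\gamma_+$ (\cite{Tolsa-book}): combining the hypothesis with the general semiadditivity $\gamma(B\cap E)\le C\sum_j\gamma(E_j\cap B)$ yields $\gamma(B\cap E)\asymp\sum_j\gamma(E_j\cap B)$, so the extremal measure $\eta$ for $\gamma_+(B\cap E)$ has mass and curvature both comparable to $\sum_j\gamma(E_j\cap B)$ and, by separation, discretizes to a point-mass measure dominating $\nu_B$. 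The genuinely delicate point, which I expect to be the technical core, is that monotonicity of curvature only returns the bound $c^2(\nu_B)\lesssim\sum_j\gamma(E_j\cap B)$ in terms of the \emph{maximal} masses, whereas the $T1$ condition demands control by the \emph{actual} masses $\sum_j\mu_j(B)$ (and local extremality of the $\mu_j$ may genuinely fail). Bridging this gap — transferring the curvature control from the extremal measure of the union to the honest masses $\mu_j(B)$, using that each $\mathcal{C}_{\mu_j}$ is individually bounded, while losing only the separation constant — is where I anticipate the real work of the proof to lie.
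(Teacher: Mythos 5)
Your overall strategy --- reduce to the local curvature condition $c^2(\chi_B\,\mu)\le C\mu(B)$ via the Melnikov--Verdera identity and the nonhomogeneous $T1$ theorem --- is legitimate, and your linear-growth verification and diagonal-term estimate are fine. But the proposal has two genuine gaps. First, the discretization step is wrong as stated: replacing $\mu_j$ restricted to $B$ by $\mu_j(B)\,\delta_{z_j}$ is \emph{not} a bounded multiplicative operation on the curvature integrand. If three centers $z_i,z_j,z_k$ are collinear then $c(z_i,z_j,z_k)=0$ while points chosen inside the discs can have strictly positive curvature, so $c^2(\nu_B)$ can vanish while the true off-diagonal curvature does not. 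The correct reduction carries an additive error of the form $c(x,y,z)\lesssim c(z_i,z_j,z_k)+ (r_i+r_j+r_k)/\bigl(D(i,j)D(i,k)\bigr)$, and the correction terms must then be summed using separation and linear growth --- a Marcinkiewicz-function computation of exactly the type the paper performs in the proof of Theorem \ref{superth} (there for $|f_j-\vf_j|$ rather than for curvature). Second, and more seriously, the central claim $c^2(\nu_B)\le C\|\nu_B\|$ is never proved: you correctly observe that the extremal measure $\eta$ for $\gamma_+(B\cap E)$ has total mass and curvature comparable to $\sum_j\gamma(E_j\cap B)$, but its per-disc masses $\eta(D_j)$ are in no way bounded below by $\mu_j(B)$, so monotonicity of curvature does not let you dominate $\nu_B$ by a discretization of $\eta$. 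You flag this yourself as ``where I anticipate the real work of the proof to lie'' --- which means the proof is not complete precisely at its core.

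It is worth noting that the paper fills exactly this hole by a different mechanism, avoiding any head-on curvature computation. It replaces each $E_j$ by a union of crosses $L_j$ with $\gamma(L_j)\asymp H^1(L_j)\asymp\gamma(E_j)$, shows in two cases (Lemmas \ref{firstcase} and \ref{secondcase}) that the local super-additivity hypothesis transfers to the crosses in the form $H^1(L\cap B)\lesssim\gamma(L\cap B)$, and then invokes the Nazarov--Volberg criterion (Theorem \ref{h1}) to bound $\mathcal{C}_{H^1|L}$, finally transferring boundedness back to $\mu$ by the per-disc comparability result Theorem \ref{comp}. The per-disc mass-distribution mismatch you identified is handled there by the ``good indices'' argument in Lemma \ref{secondcase}: one keeps only those $j$ with $\|\nu_j\|\ge\tau\ell_j$, checks these carry a fixed fraction of the mass, and applies Theorem \ref{comp} to the pair $(\nu_g,\sigma_g)$. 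If you want to complete your curvature route, some substitute for that redistribution step (or an appeal to Theorem \ref{comp} in your setting) is unavoidable; as written, the hypothesis $\sum_j\gamma(E_j\cap B)\le C_0\gamma(B\cap E)$ is invoked but never actually converted into the bound you need.
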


To prove these Theorems we will need Section \ref{super}, where a certain situation is studied, where the analytic capacity satisfies ``unnatural" super-additivity condition:
\begin{equation}
\label{su1}
\sum_j \gamma(E_j) \le C_0 \gamma(\cup_j E_j)\,.
\end{equation}
This fact, for a particular case of sets $E_j$, was proved by V. Eiderman \cite{E} using Melnikov--Menger's curvature, See also \cite{NV}.

We will be also using repeatedly  the following result from \cite{NToV1}:

\begin{theorem}
\label{comp}
Suppose $\{D_j\}$ are discs on the plane with $2B_j$ being disjoint. Let $\nu, \sigma$ be two positive measures supported in $ \cup_j D_j$ such that
$c_1\nu(D_j)\le \sigma(D_j) \le c_2 \nu(D_j)$, $0<c_1<c_2<\infty$. Then if $\nu$ is a Cauchy operator measure, then $\sigma$ is also a Cauchy operator measure.
\end{theorem}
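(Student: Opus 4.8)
The plan is to characterize $L^2(\sigma)$-boundedness of $\mathcal{C}_\sigma$ through quantities that can be compared disc-by-disc, and then to exploit the separation of the $2D_j$ to show that everything except the purely within-disc contributions is governed only by the masses $\sigma(D_j)$, which by hypothesis are comparable to $\nu(D_j)$. Concretely, I would invoke the Melnikov--Menger / Nazarov--Treil--Volberg curvature characterization: for a compactly supported measure $\mu$, the operator $\mathcal{C}_\mu$ is bounded in $L^2(\mu)$ if and only if $\mu$ has linear growth and the local curvature bound $c^2(\mu\lfloor B)\le C\,\mu(B)$ holds for every disc $B$, where $c^2(\mu\lfloor B)=\iiint_{B^3} R(x,y,z)^{-2}\,d\mu\,d\mu\,d\mu$ and $R(x,y,z)$ is the circumradius of the triangle $xyz$. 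Since $\nu$ is a Cauchy operator measure, both conditions hold for $\nu$, and the task reduces to transferring them to $\sigma$.

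I would establish linear growth first. For a disc $B=B(x,r)$ I split scales. If $r$ exceeds the radius of every $D_j$ meeting $B$, then $\sigma(B)\le\sum_{D_j\cap B\neq\emptyset}\sigma(D_j)\le c_2\sum_{D_j\cap B\neq\emptyset}\nu(D_j)\lesssim \nu(3B)\lesssim r$, where disjointness of the $2D_j$ controls the overlap and supplies the middle step while linear growth of $\nu$ gives the last. The remaining, small-scale regime is a statement about $\sigma\lfloor D_j$ at scales below $\diam D_j$, i.e. a genuinely within-disc matter, and this is precisely the place where one must know that $\sigma$ and $\nu$ are comparable \emph{as measures} on each $D_j$ (equivalently, that each local operator $\mathcal{C}_{\sigma\lfloor D_j}$ is bounded), not merely that their total masses there agree.

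The curvature estimate is the heart of the argument. I would decompose the triple integral defining $c^2(\sigma\lfloor B)$ according to how many distinct discs contain the three points. The multi-disc terms are the benign ones: if two of the points lie in different discs $D_j,D_k$, the separation $|x-y|\gtrsim \diam D_j+\diam D_k$ forced by disjointness of $2D_j,2D_k$ makes $R(x,y,z)^{-2}$ essentially frozen at the disc centers, so these contributions collapse to a discrete sum in the masses $\sigma(D_j)$ and the inter-disc distances; that sum is comparable, through $c_1,c_2$, to the corresponding sum for $\nu$, hence controlled by $c^2(\nu\lfloor CB)\le C\,\nu(CB)\lesssim\sigma(CB)$. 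The single-disc terms $\sum_j c^2(\sigma\lfloor(D_j\cap B))$ are the main obstacle: mass comparability says nothing about them, so they must be absorbed using within-disc information on $\sigma$, after which a Carleson packing argument sums them over $j$ against $\sigma(B)$. Combining the multi-disc estimate, the single-disc estimate, and the linear growth bound yields $c^2(\sigma\lfloor B)\le C\,\sigma(B)$ for every $B$, and the curvature characterization then delivers boundedness of $\mathcal{C}_\sigma$ in $L^2(\sigma)$. The one delicate point, as flagged, is that the hypothesis must supply control of $\sigma$ \emph{inside} each disc and not only its total mass there; an alternative route avoiding curvature would run the nonhomogeneous $T1$ theorem, splitting $\mathcal{C}_\sigma\mathbf{1}$ into a far field (masses only, compared with $\nu$) and a near field (within-disc, the same obstacle).
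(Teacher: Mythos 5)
A preliminary remark that frames everything else: the paper contains no proof of Theorem \ref{comp} at all --- it is imported from \cite{NToV1} as a black box --- so your attempt can only be judged on its own terms, and on those terms it has two genuine gaps, one of which you flag yourself without resolving it. That flagged gap is fatal: the single-disc contributions cannot be controlled from the stated hypotheses, because the statement as literally written is false. Take a single disc $D_1$, let $\nu$ be arc length on a diameter, and let $\sigma$ be $H^1$ on a Garnett corner-quarter set scaled into $D_1$ with $\sigma(D_1)=\nu(D_1)$: the masses are comparable, $\mathcal{C}_\nu$ is bounded, but $\mathcal{C}_\sigma$ is not (positive length, zero analytic capacity, infinite curvature). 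The implicit hypothesis --- satisfied in every application in the paper, where $\sigma$ is $H^1$ on segments (``crosses'') and $\nu$ is a restriction of a measure with $\|\mathcal{C}_{\mu_j}\|_{\mu_j}\le 1$ --- is that the restrictions $\sigma|_{D_j}$, $\nu|_{D_j}$ are themselves uniformly Cauchy operator measures. You correctly diagnose the missing hypothesis, which is a real insight, but diagnosing it is not proving the theorem: even granting local boundedness, your sketch never shows how the diagonal pieces are glued to the off-diagonal part; ``must be absorbed using within-disc information'' and ``a Carleson packing argument sums them over $j$'' are placeholders where the actual work would go.

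The second gap is in the step you consider benign. Menger curvature is \emph{not} ``essentially frozen'' when points move within well-separated discs, because $R(x,y,z)^{-2}$ is destroyed or created by flatness: take discs centered on $\R$, $\nu$ equal to length on the horizontal diameters and $\sigma$ equal to the same masses on the vertical diameters. Then $c^2(\nu)=0$ identically, while the inter-disc triples of $\sigma$ carry strictly positive curvature; so your chain ``discrete sum for $\sigma$ $\asymp$ discrete sum for $\nu$ $\lesssim c^2(\nu\lfloor CB)$'' is false --- both measures are perfectly fine here, but the domination you invoke fails. Freezing is legitimate one level down, on the Cauchy kernel itself: for $x\in D_j$ and $k\ne j$ one has $\bigl|\mathcal{C}(\chi_{D_k}\sigma)(x)-\sigma(D_k)/(y_k-x)\bigr|\lesssim r_k\,\sigma(D_k)/D(D_j,D_k)^2$, the errors are summed by a Schur/Marcinkiewicz-function estimate using linear growth (structurally the same device the paper uses in the proof of Theorem \ref{superth}, via the Schwartz lemma bound \eqref{S}), and the frozen off-diagonal operator depends only on the masses $\{\sigma(D_k)\}\asymp\{\nu(D_k)\}$, which is where the hypothesis genuinely enters; the diagonal is then handled by the uniform local boundedness discussed above. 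That kernel-level comparison, not a curvature-level one, is the viable route to the result as it is actually used in this paper.
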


\section{Super-additivity of analytic capacity}
\label{super}

We start with the following theorem.
A result close to the theorem below was proved (but not stated) in \cite{NV}. Here we use the approach via Marcinkiewicz function, in \cite{NV} the approach was a bit more complicated. 

We also mention that a version of this theorem was proved by V. Eiderman. In the proof he used the ideas on Menger Curvature. 
\begin{theorem}
\label{superth}
Let $D_j$ be circles with centers on the real line $\R$, such that for some $\la>1$ it is true that $\la D_j \cap \la D_k = \emptyset, \; \; j\not=k$. Let $E_j\subset D_j$ be sets. Then there exists a constant $c=c(\la)$, such that
$$
\gamma(\cup E_j) \geqslant c \sli_j \gamma(E_j).
$$
\end{theorem}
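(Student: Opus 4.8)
The plan is to build, on $E=\cup_j E_j$, a single positive measure whose total mass is comparable to $\sum_j\gamma(E_j)$ and which forces a lower bound for $\gamma(E)$ through Tolsa's curvature characterization of analytic capacity. Recall that $\gamma(E)\approx\gamma_+(E)$ and that $\gamma_+(E)\approx\sup\mu(E)$, the supremum taken over positive $\mu$ supported on $E$ with linear growth $\mu(B(x,r))\le r$ and finite Menger curvature $c^2(\mu)\le\mu(E)$. Accordingly, for each $j$ I would fix a near-extremal measure $\mu_j$ on $E_j$ with $\mu_j(E_j)\gtrsim\gamma(E_j)$, with $\mu_j(B(x,r))\le r$, and with $c^2(\mu_j)\le\mu_j(E_j)$, and set $\mu:=\sum_j\mu_j$, so that $\mu(E)=\sum_j\mu_j(E_j)\gtrsim\sum_j\gamma(E_j)$. (We may assume $\sum_j\gamma(E_j)<\infty$, the inequality being trivial otherwise, and by a limiting argument reduce to a finite family.) Everything then reduces to proving that $\mu$ has linear growth and bounded curvature with constants depending only on $\la$, after which Tolsa gives $\gamma(E)\gtrsim\gamma_+(E)\gtrsim\mu(E)\gtrsim\sum_j\gamma(E_j)$. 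It is worth flagging at once that one should \emph{not} try to bound $\|\mathcal{C}\mu\|_{L^\infty}$ directly: when the $D_j$ accumulate, $\mathcal{C}\mu$ genuinely blows up at an accumulation point, even though the curvature stays controlled. This is exactly why curvature, which vanishes on collinear configurations, is the right object here, and why the hypothesis that the centers lie on $\R$ is decisive.

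For the linear growth of $\mu$, fix $B(x,r)$ and split the discs meeting it according to whether $r_j\le r$ or $r_j>r$ (where $r_j$ is the radius of $D_j$). Using $\mu_j(E_j)\le\gamma(D_j)=r_j$, the small discs have centers confined to a subinterval of $\R$ of length $\lesssim r$, and since the $\la D_j$ are pairwise disjoint the intervals $[x_j-\la r_j,x_j+\la r_j]$ are disjoint there, whence $\sum_{r_j\le r} r_j\lesssim_\la r$ and $\sum_{r_j\le r}\mu_j(B)\lesssim_\la r$. Only $O_\la(1)$ large discs can meet $B(x,r)$, by the $\la$-separation, and each contributes at most $r$ through its own linear growth. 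This step is routine.

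The crux is the curvature bound $c^2(\mu)\le C(\la)\,\mu(E)$. Writing $c^2(\mu)=\iiint c(x,y,z)^2\,d\mu(x)\,d\mu(y)\,d\mu(z)$, the diagonal part with $x,y,z$ all in the same $E_j$ sums to $\sum_j c^2(\mu_j)\le\sum_j\mu_j(E_j)=\mu(E)$. The off-diagonal part, with the three points spread over at least two distinct discs, is where the geometry enters. I would use the identity $c(x,y,z)=2\,\dist(x,\ell_{yz})/(|x-y|\,|x-z|)$ together with the fact that on $D_j$ every point of $\supp\mu$ lies within $r_j$ of the line $\R$; thus points from distinct, well-separated discs are nearly collinear and $c(x,y,z)$ is small, controlled by the deviation of $\supp\mu$ from $\R$ relative to the pairwise distances. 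The efficient way to sum is the Marcinkiewicz-function approach: one dominates the off-diagonal curvature by a square function
\[
\int\Big(\int_0^\infty\Big(\frac1t\int_{|x-y|<t}\frac{\dist(y,\R)}{t}\,d\mu(y)\Big)^2\frac{dt}{t}\Big)\,d\mu(x),
\]
measuring, across all scales, the $L^2$ deviation of $\supp\mu$ from the real line. Since $\dist(y,\R)\le r_j$ on $D_j$ and the $\la$-separation forces the radii of discs lying at a given scale and distance to form a summable geometric series, this square function is bounded by $C(\la)\,\mu(E)$.

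The main obstacle is precisely this last estimate: turning the pointwise near-collinearity into a bound that is uniform over the possibly infinite, accumulating family. The delicate configurations are those in which two of the three points share a small disc while the third is far, and those clustering near an accumulation point of the $D_j$; in both cases the only gain is the squared deviation from $\R$, and it must be weighed carefully against the $\la$-separation to produce a convergent (geometric) sum with constant $C(\la)$. Once the curvature and linear growth estimates are established, Tolsa's characterization closes the argument and yields the desired $c=c(\la)$.
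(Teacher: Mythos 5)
Your proposal is correct in substance, but it takes a genuinely different route from the paper --- in fact it is essentially the Menger-curvature argument that the paper itself attributes to Eiderman and explicitly mentions as an alternative proof. The paper never touches curvature or $\gamma_+$: it places in each $D_j$ a horizontal segment $L_j\subset\mathbb{R}$ of capacity $\gamma(E_j)/100$, compares the Ahlfors function $f_j$ of $E_j$ with the Cauchy potential $\varphi_j$ of $L_j$ through a Schwarz-lemma estimate $|f_j(z)-\varphi_j(z)|\le A\,r_j\gamma_j/\dist(z,E_j\cup L_j)^2$, controls the total error by a Marcinkiewicz function $g_i=\sum_{j\ne i} r_j\gamma_j/D(Q_i,Q_j)^2$ together with a Chebyshev selection of a subfamily $I_*$ carrying $\tfrac{9}{10}\sum_j\gamma_j$, uses that $|\Im\sum\varphi_j|\le A$ because the segments lie on $\mathbb{R}$ (a Poisson-kernel bound), and concludes from Garnett's lemma (bounded imaginary part plus residue $\sum_{i\in I_*}\gamma_i$ at infinity). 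Your route reduces everything to linear growth and $c^2(\mu)\le C(\lambda)\mu(E)$ and then invokes the deep comparability $\gamma\approx\gamma_+$ of Tolsa, which the paper's argument avoids entirely. Reassuringly, both proofs hinge on the same two quantitative facts --- $\gamma(E_j)\le 2r_j$ and the summation $\sum_{i\ne j} r_i/D(Q_i,Q_j)^2\lesssim_\lambda 1/r_j$ coming from disjointness of the intervals $\lambda D_j\cap\mathbb{R}$ --- and your flagged ``main obstacle'' does close along exactly these lines, in fact by a direct triple sum without any square-function machinery: for triples with two points $y,z$ in one $E_j$ and the third point $x$ in $E_i$ (where $\ell_{yz}$ may be badly oriented) use the crude bound $c(x,y,z)\le 2/\max(|x-y|,|x-z|)\lesssim_\lambda 1/D(Q_i,Q_j)$, so this block contributes $\sum_{i,j}\gamma_i\gamma_j^2/D(Q_i,Q_j)^2\lesssim_\lambda\sum_j\gamma_j$ via $\gamma_j\le 2r_j$; for three distinct discs the slope bound $\lesssim_\lambda (r_j+r_k)/D(Q_j,Q_k)$ gives $c(x,y,z)\lesssim_\lambda (r_i+r_j+r_k)/\bigl(D(Q_i,Q_j)D(Q_i,Q_k)\bigr)$ up to symmetrization, and the same Marcinkiewicz sums finish. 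Note that $\lambda>1$ enters your proof precisely where it enters the paper's: it makes distances between points in distinct discs comparable to $D(Q_i,Q_j)$, and its failure at $\lambda=1$ is exactly the paper's open question in Section \ref{q}. What each approach buys: the paper's proof is classical function theory, self-contained modulo Garnett's book; yours is shorter and more standard by today's lights, but leans on the heavy theorem $\gamma\approx\gamma_+$, and in exchange yields superadditivity of $\gamma_+$ as a bonus.
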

\begin{proof}
It is enough to prove the result for finite families of indices $j$. We first notice that  $\gamma_j := \gamma(E_j) \le diam(E_j) \le 2 r_j$, where $r_j$ is the radius of $D_j$.
Let also $y_j$ be the center of $D_j$.
In each $D_j$ we put a horizontal line segment $L_j$ with center at $y_j$ and with capacity $\frac{1}{100}\gamma_j$. Thus, the length of $L_j$ satisfies $\ell_j \leqslant \frac{1}{20} \gamma_j < \frac{r_j}{10}$.

Next, let $f_j$ be the functions that gives the capacity of $E_j$; let $\vf_j$ be the function that gives the capacity of $L_j$. We write
$$
\vf_j(z)=\ili_{L_j}\frac{\vf_j(x)}{x-z}dx, \; \; \; \; \ili \vf_j(x)dx = \frac{\gamma_j}{100}.
$$
Functions $\vf_j(x)$ have uniform bound $\|\vf_j\|_{\infty} \le A$ by absolute constant. In particular, if $\mathcal{F}$  is any subset of indices $j$ we have
\begin{equation}
\label{img}
|\Im \sum_{j\in \mathcal{F}} \vf_j(z)| \le A \int_{\cup_{j\in \mathcal{F}}L_j}\frac{\Im z}{|t-z|^2}\,dt\le A\,, \forall z\in \mathbb{C}\,.
\end{equation}

Our next goal will be to find a family $\mathcal{F}$ of indices such that the following two assertions hold:
\begin{equation}
\label{sglarge}
\sum_{i\in \mathcal{F}} \gamma_i \ge a_1 \sum_j \gamma_j\,,
\end{equation}

\begin{equation}
\label{fg}
\sum_{i\in \mathcal{F}} |f_i(z)-\vf_i(z)| \le a_2\,,\forall z\in \mathbb{C}\,.
\end{equation}

Let us finish the proof of the theorem, taken these assertions for granted (for a short while). Combining \eqref{img} and \eqref{fg} we get $|\Im \sum_{i\in \mathcal{F}} f_i| \le A_1$. On the other hand, the residue at infinity of $F:=\sum_{i\in \mathcal{F}} f_i$ is $\sum_{i\in \mathcal{F}} \gamma_i$. Then, by \cite{G} we conclude
$$
\gamma(\cup_{i\in \mathcal{F}} E_i) \ge \frac{a}{A_1} \sum_{i\in \mathcal{F}} \gamma_i\,.
$$
 Combine this with \eqref{sglarge}. Then we obtain, that
 $$
 \gamma(\cup_j E_j) \ge \gamma(\cup_{i\in \mathcal{F}} E_i) \ge a_3 \sum_j \gamma_j\,,
$$
and Theorem \ref{superth} would be proved. So we are left to chose the family $\mathcal{F}$ such that \eqref{sglarge}, \eqref{fg} hold.


By the Schwartz lemma in the form we borrow from \cite{G}, we have
\begin{equation}
\label{S}
|f_j(z)-\vf_j(z)| \leqslant \frac{A r_j \gamma_j}{dist(z, E_j\cup L_j)^2}\,,\,\, z\notin D(y_j, \la_0 r_j)\,,
\end{equation}
for a {\bf fixed} $\la_0 >1$.
Denote
$$
Q_i:=D(y_i, \la_0 r_i)\,,\,\,\, g_i:= \sum_{j:\,j\neq i}  \frac{ r_j \gamma_j}{D(Q_j, Q_i)^2}\,,
$$
where $D(Q_i, Q_j):= \text{dist}(Q_i, Q_j) +r_i+r_j$. We can consider function $g$ equal to constant $g_j$ on $Q_j$. Often such object is called a Marcinkiewicz function. What is important is that we can estimate $\sum_i g_i\gamma_i$. In fact,
\begin{align*}
\sum_i g_i\gamma_i =\sum_i\gamma_i \sum_{j:\,j\neq i} \frac{r_j\gamma_j}{D(Q_j, Q_i)^2}= \sum_j r_j \gamma_j  \sum_{i:\,i\neq j} \frac{\gamma_i}{D(Q_i, Q_j)^2} \le \\
 \sum_j r_j \gamma_j  \sum_{i:\,i\neq j} \frac{r_i}{D(Q_i, Q_j)^2} \le A_0 \sum_j r_j \gamma_j r_j^{-1}=A_0\sum_j\gamma_j\,.
 \end{align*}
Now we use Tchebysheff inequality. Denote $I^*:=\{i:\, g_i > 10 A_0\}\,,\, I_*:= \{i:\, g_i \le10 A_0\}$.  We immediately see that
\begin{equation}
\label{I}
\sum_{i\in I_*} \gamma_i \ge \frac{9}{10} \sum_j\gamma_j\,.
\end{equation}

Obviously by \eqref{S}
$$
\forall i\,\forall z\in Q_i \,\, \sum_{j:\, j\neq i} |f_j(z)-\vf_j(z)| \le A g_i\,.
$$
This and the choice of $I_*$ imply that
$$
\forall i\in I_*\,\forall z\in Q_i \,\, \sum_{j:\, j\neq i\,,\, j\in I_*} |f_j(z)-\vf_j(z)| \le A g_i\le 10A_0A\,.
$$
But all functions $|f_j|, |\vf_j|$ are bounded by $1$ everywhere. Therefore, the last inequality implies
\begin{equation}
\label{sumfg}
\sum_{j:\,  j\in I_*} |f_j(z)-\vf_j(z)| \le  10A_0A+2=: A_1 \,\, \forall z\in \cup_{i\in I_*} Q_i\,.
\end{equation}
But function $\sum_{j\in I_*}( f_j-\vf_j)$ is analytic in $\mathcal{C}\setminus \cup_{j\in \mathcal{F}} Q_j$ and vanishes at infinity. Therefore, \eqref{sumfg} implies \eqref{fg} if we put $\mathcal{F}:=I_*$. The assertion \eqref{sglarge} is proved in \eqref{I}. We  are done.

\end{proof}

\begin{corollary}
\label{circle}
By the fact that conformal map of the half-plane onto the unit disc  preserves the analytic capacity up to a constant, and by an obvious observation on dilations, we can see that the same theorem  holds true if centers are on a circle, instead of being on the line.
\end{corollary}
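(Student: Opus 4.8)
The plan is to transplant the whole configuration from the circle to the real line by a Möbius transformation and then invoke Theorem \ref{superth} as a black box. As in that theorem it suffices to treat finite families, and by the dilation (and translation/rotation) invariance $\gamma(tE)=t\,\gamma(E)$ we may normalize the circle $S$ carrying the centers to be the unit circle. A Möbius map $\Phi$ sending $S$ onto $\R$ carries each disc $D_j$ to a genuine disc $\widetilde{D}_j=\Phi(D_j)$, since Möbius maps send circles to circles, and carries each center $c_j\in S$ to a point $\Phi(c_j)\in\R$. When $D_j$ is bounded away from the pole of $\Phi$, the map has bounded variation over $D_j$, so the Euclidean center of $\widetilde{D}_j$ lies within a bounded multiple of its radius from $\Phi(c_j)\in\R$; enlarging $\widetilde{D}_j$ by a fixed factor we obtain a disc $\widehat{D}_j$ \emph{centered on $\R$} at $\Phi(c_j)$, of comparable radius and still containing $\widetilde{E}_j:=\Phi(E_j)$. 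This places us exactly in the hypotheses of Theorem \ref{superth}.

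The capacity bookkeeping is where the cited conformal fact enters. Every chart $\Phi$ we use is the standard Cayley transform composed with a rotation and a dilation; under the affine factors $\gamma$ scales explicitly, while the Cayley transform preserves $\gamma$ up to an absolute constant on any region of bounded distortion. Hence, on a disc $D_j$ where $|\Phi'|\asymp t_j$, one has $\gamma(\Phi(F))\asymp t_j\,\gamma(F)$ for every compact $F\subset D_j$, with constants independent of $j$ and $F$; in particular $\gamma(\widetilde{E}_j)\asymp t_j\gamma_j$. The separation $\lambda D_j\cap\lambda D_k=\emptyset$ transplants, after the re-centering above, to $\lambda'\widehat{D}_j\cap\lambda'\widehat{D}_k=\emptyset$ for some $\lambda'>1$, provided the distortion of $\Phi$ over the discs in play is close enough to a similarity. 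Applying Theorem \ref{superth} to $\{\widehat{D}_j,\widetilde{E}_j\}$ then gives $\gamma\bigl(\cup_j\widetilde{E}_j\bigr)\ge c(\lambda')\sum_j\gamma(\widetilde{E}_j)$, and transplanting back by $\Phi^{-1}$ (on a chart where all $t_j$ are mutually comparable, so the common factor cancels) yields the desired $\gamma(\cup_j E_j)\gtrsim\sum_j\gamma_j$.

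The main obstacle is precisely that the distortion factor $t_j\asymp|\Phi'(c_j)|$ is \emph{not} constant along $S$ and blows up near the pole of $\Phi$, so neither the comparison for the union nor the surviving separation $\lambda'>1$ can hold for a single global chart. I would resolve this by localizing. First peel off the \emph{large} discs, those with $r_j\ge\delta$ (the radius of $S$ being normalized to $1$): since the arcs $\lambda D_j\cap S$ are disjoint, their number is $O_\lambda(1/\delta)=O(1)$, so by monotonicity $\sum_{\text{large}}\gamma_j\le O(1)\,\gamma(\cup_j E_j)$ and these terms are harmless. For the \emph{small} discs, $r_j<\delta$, cover $S$ by $O(1/\delta)$ arcs of length $\asymp\delta$ and assign each such disc to the arc containing $c_j$. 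Choosing for each arc a chart whose pole sits at the antipode, the map is a near-similarity, with bi-Lipschitz constant as close to $1$ as we wish once $\delta=\delta(\lambda)$ is small enough, uniformly over the $\delta$-neighborhood of the arc. Thus within each chart all the $t_j$ are comparable, the value $\lambda'>1$ is retained, and the clean transplant of the previous paragraph applies, giving $\gamma(\cup_j E_j)\ge\gamma(\cup_{j\in\,\text{chart}}E_j)\gtrsim\sum_{j\in\,\text{chart}}\gamma_j$ for each of the boundedly many charts. Summing over the charts and adding back the large discs produces $\gamma(\cup_j E_j)\ge c(\lambda)\sum_j\gamma_j$, which is the assertion of the corollary; the care needed to keep $\lambda'>1$ through the bi-Lipschitz distortion, and to ensure that a small disc straddling an arc boundary stays far from its chart's pole, is exactly the point that must be checked.
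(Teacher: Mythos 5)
Your proposal is correct and follows the same route the paper intends: the paper's entire ``proof'' of Corollary \ref{circle} is the one-sentence appeal to conformal invariance of $\gamma$ under the half-plane-to-disc map together with dilations, and you implement exactly that transplantation into Theorem \ref{superth}. What you add --- peeling off the $O_\lambda(1)$ large discs and covering the circle by $O(1/\delta)$ arcs, each with its own chart whose pole sits at the antipode --- is precisely the content the paper leaves implicit, and it is genuinely needed: the claim that a M\"obius map preserves $\gamma$ up to an absolute constant is false globally (the distortion blows up near the pole), so a single chart cannot simultaneously give the uniform comparability $\gamma(\Phi(F))\asymp t\,\gamma(F)$ for all the $E_j$ and a surviving separation constant $\lambda'>1$. (One could instead place the pole of a single chart in a gap between the arcs $\lambda D_j\cap S$, which bounds the distortion on each $D_j$ by $C(\lambda)$; but then the re-centering of $\Phi(D_j)$ onto $\R$ can cost a factor close to $2$ in radius and eat the whole separation margin when $\lambda$ is near $1$ --- a pitfall your near-similarity charts avoid.) The details you flag as needing care are indeed routine in your setup: the exact M\"obius identity $|\Phi(z)-\Phi(w)|=|z-w|\sqrt{|\Phi'(z)|\,|\Phi'(w)|}$ shows that separation and re-centering survive with $\lambda'=\lambda(1-O(\delta))>1$ once $\delta=\delta(\lambda)$ is small, and a small disc straddling an arc boundary stays at distance $2-O(\delta)$ from its chart's antipodal pole. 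So the verdict is: correct, same underlying approach as the paper, with your localization supplying a step the paper's one-line argument glosses over.
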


\begin{remark}[Open question]
It is not clear if the theorem is true or not when $\la=1$.
\end{remark}
\section{Beginning the proof of Theorem \ref{main1}}
\label{pr1}

In this section we are proving the following theorem (Theorem \ref{main1}).
\begin{theorem}
Let $E_j$ be sets, and $E=\cup E_j$. Suppose $E_j\subset D_j$, where $D_j$ are discs and $20D_j$ are disjoint. Suppose $\mu_j$ are measures on $E_j$, such that $\mu_j(E_j)\sim \gamma(E_j)$. Denote $\mu = \sli \mu_j$. If for any disc $B$ we have
$$
\gamma(E\cap B)\geqslant c_0 \sli \gamma(E_j \cap B),
$$
then $\mathcal{C}_\mu$ is bounded from $L^2(\mu)$ to itself with norm depending only on $c_0$.
\end{theorem}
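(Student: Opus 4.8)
The plan is to verify the two hypotheses of the nonhomogeneous $T1$/curvature characterization of $L^2$-boundedness of the Cauchy transform: linear growth of $\mu$, and a Carleson-type bound on the local Menger curvature. First I would reduce to finite subfamilies $\{\mu_j\}_{j\in S}$. The estimates below depend only on $c_0$ and on the individual bounds $\|\mathcal C_{\mu_j}\|_{\mu_j}\le 1$, $\mu_j(E_j)\sim\gamma(E_j)$; so if $\mathcal C_{\mu_S}$, with $\mu_S=\sum_{j\in S}\mu_j$, is bounded with a $c_0$-controlled norm for every finite $S$, the same follows for $\mu$ by a standard truncation and limiting argument. Observe that the growth bound for $\mu_S$ is inherited from that for $\mu$, since $\mu_S\le\mu$.

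Step 1, linear growth. Each $\mu_j$ has $\|\mathcal C_{\mu_j}\|_{L^2(\mu_j)}\le 1$, and the same holds for every restriction $\mu_j|_F$ with $F\subset E_j$, because $\mathcal C_{\mu_j|_F}g=\bigl(\mathcal C_{\mu_j}(g\chi_F)\bigr)|_F$. By Tolsa's theorem $\gamma\approx\gamma_+$, a measure of linear growth whose Cauchy transform is bounded on its own $L^2$ satisfies $\mu_j|_F(F)\lesssim\gamma_+(F)\lesssim\gamma(F)$. Since the discs $D_j$ are disjoint, for any disc $B$ we obtain $\mu(B)=\sum_j\mu_j(E_j\cap B)\lesssim\sum_j\gamma(E_j\cap B)\lesssim c_0^{-1}\gamma(E\cap B)\le c_0^{-1}\gamma(B)=c_0^{-1}r_B$, where the middle step is exactly the hypothesis. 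Hence $\mu$ has linear growth with constant $\lesssim c_0^{-1}$.

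Step 2, reduction to curvature, and the diagonal term. For a measure of linear growth, $\mathcal C_\mu$ is bounded on $L^2(\mu)$ if and only if $\int_B\int_B\int_B c(x,y,z)^2\,d\mu\,d\mu\,d\mu\lesssim\mu(B)$ for every disc $B$, where $c(x,y,z)$ is the Menger curvature; this is the Melnikov--Verdera curvature identity combined with the $T1$ theorem of \cite{NToV1} (see also \cite{Tolsa-book}). Splitting the triple integral according to which discs contain $x,y,z$, the diagonal part, where all three lie in a single $E_j$, is $\sum_j c^2(\mu_j|_{E_j\cap B})\lesssim\sum_j\mu_j(E_j\cap B)=\mu(B)$, again by Melnikov--Verdera applied to the restrictions, whose Cauchy norms are $\le 1$.

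Step 3, the cross-disc interaction, which I expect to be the main obstacle. The difficulty is the off-diagonal part, where at least two distinct discs occur. The separation $20D_j\cap 20D_k=\emptyset$ produces decay of the Menger curvature of the Marcinkiewicz type already used in the proof of Theorem \ref{superth} (the quantity $g_i=\sum_{j\ne i}r_j\gamma_j/D(Q_i,Q_j)^2$). However, separation and growth alone are \emph{not} enough: a self-similar planar arrangement of flat pieces can have divergent cross-curvature, and such configurations are precisely the ones excluded by the hypothesis. The heart of the matter is therefore to convert the super-additivity inequality $\gamma(E\cap B)\gtrsim\sum_j\gamma(E_j\cap B)$ into control of this cross-curvature. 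I would proceed by using $\gamma\approx\gamma_+$ together with the comparison Theorem \ref{comp} to replace each $\mu_j$ by any convenient extremal measure on $E_j$ (legitimate because $\mu(D_j)=\mu_j(E_j)\approx\gamma(E_j)$ matches disc by disc), and then estimate the remaining triple sum against the flat model of Corollary \ref{circle}: the hypothesis should force the capacitary data $\{\gamma(E_j\cap B)\}$ to distribute, at every scale, as if strung along a line, turning the curvature bound from $c\lesssim(\mathrm{dist})^{-1}$ into $c\lesssim r\,(\mathrm{dist})^{-2}$ and making the Marcinkiewicz sum converge with total bounded by $\sum_j\gamma(E_j\cap B)\lesssim c_0^{-1}\gamma(E\cap B)\approx\mu(B)$. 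Quantifying ``super-additivity forces approximate flatness at all scales,'' and thereby taming the cross-disc curvature, is the crux; as an alternative I would consider feeding the Garnett--Vitushkin functions $f_j$ from the proof of Theorem \ref{superth} into a $Tb$ theorem, using the hypothesis to guarantee that the resulting testing function is nondegenerate.
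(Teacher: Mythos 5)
Your Steps 1 and 2 are fine: the linear growth of $\mu$ does follow from the hypothesis via $\gamma\approx\gamma_+$, and the reduction to a local curvature (or $T1$-testing) condition plus the diagonal estimate are standard and correctly executed. But Step 3 is not a proof — it is an announcement of the difficulty followed by an unsubstantiated heuristic, and the heuristic itself is wrong. The hypothesis $\gamma(E\cap B)\gtrsim\sum_j\gamma(E_j\cap B)$ does \emph{not} force the discs $D_j$ to be ``strung along a line at every scale'': nothing in Theorem \ref{main1} places the centers of the $D_j$ on any curve, and super-additivity holds in genuinely two-dimensional arrangements (for instance when the mutual distances are large compared to the radii). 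So the claimed upgrade of the curvature decay from $c\lesssim(\mathrm{dist})^{-1}$ to $c\lesssim r\,(\mathrm{dist})^{-2}$ has no basis, and the Marcinkiewicz sum you need does not converge under your stated assumptions. The concrete missing step — converting the capacitary super-additivity into a bound on the cross-disc triple integral — is exactly the content you would have to supply, and neither the ``flatness'' sketch nor the one-line $Tb$ alternative does so.

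It is instructive to contrast this with how the paper closes the same gap. The only place where a line/circle geometry is used there is Theorem \ref{superth} and Corollary \ref{circle}, and they are applied not to the $D_j$ themselves but to the auxiliary discs $\hat{D_j}$, which by construction are centered on $\partial B$ — flatness is an artifact of localizing to the disc $B$, not a structural property extracted from the hypothesis. For the discs strictly inside $B$ the paper argues completely differently (Lemmas \ref{firstcase} and \ref{secondcase}): in the second case it takes the extremal measure $\nu$ for $\gamma(F)$, selects the ``good'' indices where $\|\nu_j\|\gtrsim\ell_j$, and invokes the comparison Theorem \ref{comp} to transfer boundedness to $H^1$ on the crosses $L_j$. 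Having transferred the local super-additivity from the $E_j$ to the crosses, it then applies the Nazarov--Volberg criterion (Theorem \ref{h1}) for measures of the form $H^1|L$, and finally Theorem \ref{comp} once more to return from $H^1|L$ to $\mu$. In other words, the paper never estimates cross-disc curvature at all; it sidesteps the very sum your Step 3 cannot control. Your framework could conceivably be completed, but as written the central estimate is missing, so the proposal does not constitute a proof.
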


Before we begin proving the theorem, we need some construction and notation. First, we define new $L_j$. We fix  $2D_j$ and place a ``cross'' at the center of $D_j$, and $N+1$ crosses that touch $\partial (2D_j)$ (see Figure 1). The choice of $N$ will be independent of $j$. The size of these crosses are such that
$$
\gamma(L_j)\sim H^1(L_j) \sim \gamma(E_j),
$$
where $L_j$ is the union of crosses.
We explain how to chose $N$. In fact, $N$ is big enough so that the following holds.
\begin{prop}
If a disc $B$ intersects $D_j$ and $\C \setminus (10D_j)$ then at least one cross from $L_j$ lies inside $B$.
\end{prop}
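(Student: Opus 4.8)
The plan is to reduce the statement to a uniform lower bound on the length of the arc of $\partial(2D_j)$ that $B$ is forced to contain, and then to choose $N$ so finely that one of the equally spaced crosses sitting on $\partial(2D_j)$ must fall inside $B$. After a translation and a dilation I may assume $y_j=0$ and $r_j=1$; write $B=D(z_0,\rho)$ and set $d=|z_0|$. Since $B$ meets $D_j$ there is a point $p\in B$ with $|p|\le 1$, and since $B$ meets $\C\setminus(10D_j)$ there is a point $q\in B$ with $|q|>10$. By convexity the segment $[p,q]$ lies in $B$, and as it runs from $\{|z|<2\}$ to $\{|z|>2\}$ it crosses the circle $\partial(2D_j)=\{|z|=2\}$; hence $B\cap\partial(2D_j)\neq\emptyset$.

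Next I would record the two quantitative constraints forced by the hypotheses: meeting $D_j$ gives $d\le\rho+1$, while meeting $\C\setminus(10D_j)$ gives $d+\rho>10$, and together these yield $\rho>9/2$. Parametrizing $\partial(2D_j)$ by the angle $\phi$ measured from the direction of $z_0$, the portion of $\partial(2D_j)$ lying in $B$ is precisely the arc $\{|\phi|\le\phi_0\}$, where $\cos\phi_0=(4+d^2-\rho^2)/(4d)$, with the understanding that the entire circle lies in $B$ when the right-hand side is $\le -1$ (in particular when $d=0$).

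The heart of the matter is a uniform lower bound $\phi_0\ge\theta_0$ for an absolute constant $\theta_0>0$. Since $\cos\phi_0$ decreases in $\rho$, for each admissible $d$ it is largest at the smallest admissible radius $\rho=\max(d-1,10-d)$. For $d\le 4$ this already forces $\cos\phi_0\le -1$, so the whole circle lies in $B$; for $4\le d\le 11/2$ one gets $\cos\phi_0=5-24/d$, and for $d\ge 11/2$ one gets $\cos\phi_0=\tfrac12+3/(4d)$, both maximized at $d=11/2$ with common value $7/11$. Hence $\cos\phi_0\le 7/11$, i.e. $\phi_0\ge\arccos(7/11)>\pi/4=:\theta_0$, and $B$ always contains an arc of $\partial(2D_j)$ of angular length at least $2\theta_0>\pi/2$, uniformly in $j$ and in $B$.

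It remains to fix $N$. The boundary crosses are centered at $N+1$ equally spaced points of $\partial(2D_j)$, so their angular spacing is $2\pi/(N+1)$, while each has diameter $\lesssim r_j/N$ because $H^1(L_j)\sim\gamma(E_j)\le 2r_j$ is split among the $N+2$ crosses; both quantities are scale invariant. Choosing $N$ so large that $2\pi/(N+1)<\theta_0/2$, at least one cross-center lies in the sub-arc $\{|\phi|\le\theta_0/2\}$, hence at angular distance $\ge\theta_0/2$ from the endpoints of the containing arc and therefore at a definite Euclidean distance strictly inside $B$; since the cross diameter tends to $0$ relative to $r_j$ as $N\to\infty$, for $N$ large the whole cross is contained in $B$. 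I expect the main obstacle to be exactly the uniform bound on $\phi_0$: using only $B\cap D_j\neq\emptyset$, the disc $B$ could be nearly tangent to $\partial(2D_j)$ and capture an arbitrarily short arc, so it is the forced reach past $10D_j$ that prevents the arc from degenerating and keeps $\phi_0$ bounded away from $0$.
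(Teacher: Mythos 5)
Your proposal is correct, and it is worth noting that the paper itself offers no proof of this proposition at all: it is stated as the \emph{criterion} by which $N$ is chosen (``$N$ is big enough so that the following holds''), with only Figure~1 as justification. So you are filling a genuine gap rather than paralleling an argument. Your quantitative skeleton checks out: normalizing $r_j=1$, the constraints $d\le \rho+1$ and $d+\rho\ge 10$ do force $\rho\ge 9/2$; the arc of $\partial(2D_j)$ inside $B$ is indeed $\{|\phi|\le\phi_0\}$ with $\cos\phi_0=(4+d^2-\rho^2)/(4d)$, monotone in $\rho$; and the extremal configuration on the boundary curve $\rho=\max(d-1,10-d)$ occurs at $d=11/2$, $\rho=9/2$, giving $\cos\phi_0\le 7/11<\cos(\pi/4)$, hence the uniform angular bound $\phi_0\ge\pi/4$. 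Your closing diagnosis is also the right one: it is the forced reach past $10D_j$ that prevents the tangency degeneration, and this is exactly why the paper's construction uses $10D_j$ rather than, say, $2D_j$.

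Two small steps are glossed and deserve a line each, though both are easily repaired. First, the boundary crosses \emph{touch} $\partial(2D_j)$ from inside, so their centers lie at radius $2-O(1/N)$ rather than exactly on the circle; this perturbation is of the same order as the cross diameter and is absorbed by the margin for large $N$. Second, ``angular distance $\ge\theta_0/2$ from the endpoints of the arc'' does not by itself give Euclidean distance to $\partial B$ (the endpoints are only two points of $\partial B$); what you actually need is the bound
$$
\rho-|2e^{i\phi}-z_0|=\frac{4d\,(\cos\phi-\cos\phi_0)}{\rho+|2e^{i\phi}-z_0|}\ \ge\ \frac{4d\,\bigl(\cos(\theta_0/2)-\cos\theta_0\bigr)}{2\rho}\ \ge\ c>0
$$
for $|\phi|\le\theta_0/2$, where $c$ is absolute because in the non-degenerate (proper arc) case one has $d>4$ and $\rho\le d+2$, so $d/\rho$ is bounded below; the whole-circle case gives an even larger margin directly. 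With that margin in hand, your choice of $N$ (spacing $2\pi/(N+1)<\theta_0/2$ and cross diameter $\lesssim r_j/N$ below the margin) completes the proof, and everything is scale invariant as you say, so the constant $N$ is indeed independent of $j$.
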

\begin{figure}
\begin{center}
\includegraphics[width=0.5\linewidth]{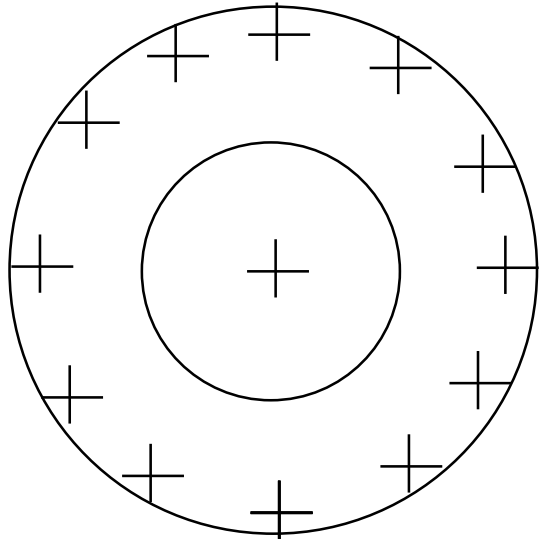} \\
\caption{Definition of $L_j$}
\end{center}
\end{figure}

Next, if a disc $D$ intersects both disc $B$ and $\C\setminus B$ then by $\hat{D}$ we denote the smallest disc with center on $\partial B$ that contains $D$. It is clear that $r(\hat{D})\geqslant r(D)$. Also, $\hat{D}\subset 3D$.

We need following lemmas.
\begin{lemma}
\label{crossB}
For any cross and any disc $B$ it is true with uniform constants that
$$
\gamma(\mbox{cross}\; \cap B) \sim H^1(\mbox{cross}\;\cap B).
$$
\end{lemma}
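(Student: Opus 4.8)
The plan is to reduce the whole statement to the classical computation of the analytic capacity of a single line segment, namely $\gamma(S)=\tfrac14 H^1(S)$ for any line segment $S$, together with monotonicity and semiadditivity of $\gamma$. First I would write a cross as the union of its two legs, $\text{cross}=S_1\cup S_2$, where $S_1$ and $S_2$ are line segments meeting in at most one point. For an arbitrary disc $B$, convexity of $B$ guarantees that each intersection $S_i\cap B$ is again a line segment (possibly empty or a single point), so the segment formula gives $\gamma(S_i\cap B)=\tfrac14 H^1(S_i\cap B)$ exactly. Moreover $H^1(\text{cross}\cap B)=H^1(S_1\cap B)+H^1(S_2\cap B)$, since $S_1\cap S_2$ has zero length.

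For the lower bound I would use monotonicity of analytic capacity: since $S_1\cap B\subset\text{cross}\cap B$ and $S_2\cap B\subset\text{cross}\cap B$, we obtain $\gamma(\text{cross}\cap B)\ge \max_i\gamma(S_i\cap B)=\tfrac14\max_i H^1(S_i\cap B)\ge \tfrac18\bigl(H^1(S_1\cap B)+H^1(S_2\cap B)\bigr)=\tfrac18 H^1(\text{cross}\cap B)$, where the middle inequality is just $\max(a,b)\ge\tfrac12(a+b)$.

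For the upper bound I would invoke the (semi)additivity of analytic capacity in its two-set form $\gamma(A_1\cup A_2)\le C\bigl(\gamma(A_1)+\gamma(A_2)\bigr)$, a special case of Tolsa's countable semiadditivity. Applying it with $A_i=S_i\cap B$ yields $\gamma(\text{cross}\cap B)\le C\bigl(\gamma(S_1\cap B)+\gamma(S_2\cap B)\bigr)=\tfrac{C}{4}H^1(\text{cross}\cap B)$. Combining the two bounds gives $\gamma(\text{cross}\cap B)\sim H^1(\text{cross}\cap B)$ with absolute constants, uniformly in the cross and in $B$ (and trivially so when $B$ misses the cross, both sides being $0$).

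There is no serious obstacle here: the only inputs are the exact capacity of a segment, monotonicity, and semiadditivity of $\gamma$. The single point to check is that each $S_i\cap B$ is an honest segment, so that the segment formula applies with no loss; this is immediate from convexity of the disc. The upper bound is the only place that relies on a nontrivial external fact, namely the semiadditivity of analytic capacity.
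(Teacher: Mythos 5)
Your proof is correct; note that the paper states Lemma \ref{crossB} with no proof at all (it is treated as a standard fact), so your argument fills a gap rather than duplicates one. Your lower bound is exactly the natural route: by convexity of $B$ each leg meets $B$ in a segment, Pommerenke's formula $\gamma(S)=\frac14 H^1(S)$ applies exactly, and monotonicity together with $\max(a,b)\ge\frac12(a+b)$ gives $\gamma(\mbox{cross}\cap B)\ge\frac18 H^1(\mbox{cross}\cap B)$ with absolute constants. The one place where you reach for unnecessarily heavy machinery is the upper bound: two-set semiadditivity $\gamma(A_1\cup A_2)\le C\bigl(\gamma(A_1)+\gamma(A_2)\bigr)$ is Tolsa's theorem, a deep result, and while it is valid and applicable here, it can be bypassed entirely. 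For any Borel set $E$ one has the elementary estimate $\gamma(E)\le H^1_\infty(E)\le H^1(E)$, where $H^1_\infty$ denotes one-dimensional Hausdorff content (this is classical; see \cite{G}); applied to $\mbox{cross}\cap B$ it yields the upper bound with constant $1$, with no decomposition into legs and no need to know that $S_i\cap B$ are segments. So the only genuinely nontrivial input the lemma needs is the exact capacity of a segment, used on the lower-bound side; the upper bound is soft. With that simplification your argument is the cleanest self-contained proof of the lemma, and uniformity in the cross and in $B$ is clear since every constant involved is absolute.
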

\begin{lemma}
\label{discinside}
If at least one cross gets inside $B$ then with uniform constants $\gamma(L_j)\sim \gamma(L_j\cap B)$. In particular this is true if $D_j\subset D$, or if $B$ intersects $D_j$ and $\C\setminus\10 D_j$.
\end{lemma}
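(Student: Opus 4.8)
The bound $\gamma(L_j\cap B)\le\gamma(L_j)$ is immediate from the monotonicity of analytic capacity, so the entire content of the lemma is the reverse inequality $\gamma(L_j\cap B)\gtrsim\gamma(L_j)$. The plan is to extract from $L_j\cap B$ a single cross whose capacity already captures a fixed proportion of $\gamma(L_j)$, and then to check that each geometric configuration listed in the ``in particular'' clause does force at least one cross into $B$.

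First I would record the two features built into the construction of $L_j$: the number of crosses is $N+2$ with $N$ chosen independently of $j$, the crosses are congruent (one centered at $y_j$, the remaining $N+1$ placed on $\partial(2D_j)$), and their common size is calibrated so that $\gamma(L_j)\sim H^1(L_j)\sim\gamma(E_j)$. Because the crosses are congruent and their number is the fixed quantity $N+2$, each individual cross $\kappa$ satisfies $H^1(\kappa)=H^1(L_j)/(N+2)$, and hence $H^1(\kappa)\sim H^1(L_j)$ with a constant depending only on $N$.

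Now suppose at least one cross $\kappa$ lies inside $B$. Then $L_j\cap B\supset\kappa$, so by monotonicity $\gamma(L_j\cap B)\ge\gamma(\kappa)$. Applying Lemma \ref{crossB} to $\kappa$ with a disc that contains it (so that the relevant intersection is all of $\kappa$) gives $\gamma(\kappa)\sim H^1(\kappa)$, and chaining the comparisons yields
$$
\gamma(L_j\cap B)\ \ge\ \gamma(\kappa)\ \sim\ H^1(\kappa)\ \sim\ H^1(L_j)\ \sim\ \gamma(L_j),
$$
with all constants uniform in $j$. It remains to verify the hypothesis ``one cross inside $B$'' in the two stated cases. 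When $B$ meets both $D_j$ and $\C\setminus 10D_j$, this is exactly the Proposition preceding the lemma. When $D_j\subset B$, the central cross is centered at $y_j$ and has size comparable to $\gamma(E_j)\le 2r_j$, so it is contained in $D_j$ and therefore in $B$, again supplying a cross inside $B$.

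The main obstacle is the step $H^1(\kappa)\sim H^1(L_j)$, i.e.\ making rigorous that a single cross carries a fixed fraction of the total. This rests entirely on the two design choices of the construction — the congruence of the crosses and the uniform bound $N+2$ on their number — so the argument is only as uniform as the choice of $N$ allows. Once $N$ is fixed independently of $j$ (as it is in the Proposition), every comparison constant above is absolute, and the lemma follows.
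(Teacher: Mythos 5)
Your proof is correct and takes essentially the same route as the paper's one-line argument: both reduce the lemma to the fact that any single cross inside $B$ carries a fixed fraction (depending only on $N$) of $\gamma(L_j)$ --- the paper writes this as $\gamma(L_j)\leqslant A(N+1)\,\gamma(\mbox{central cross})\leqslant A(N+1)\,\gamma(L_j\cap B)$ using subadditivity of $\gamma$ and congruence of the crosses, while you justify the same fixed-fraction claim through the calibration $\gamma(L_j)\sim H^1(L_j)$ together with Lemma \ref{crossB}. Your explicit verification of the ``in particular'' clause (the Proposition when $B$ meets $D_j$ and $\C\setminus 10D_j$, and the central cross lying in $D_j\subset B$ in the other case) is a detail the paper leaves implicit, but it is consistent with the construction.
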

\begin{proof}
In fact, $\gamma(L_j)\leqslant A \cdot (N+1) \cdot \gamma(\mbox{central cross}) \leqslant A(N+1)\gamma(L_j \cap B)$.
\end{proof}
\begin{lemma}
\label{cutfrominsidediscs}
 With uniform constants
$$
\gamma(\bigcup_{D_j\subset B} L_j)\sim \gamma(\bigcup_{D_j\subset B} L_j \cap B).
$$
\end{lemma}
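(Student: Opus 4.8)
The bound $\gamma\big(\bigcup_{D_j\subset B}(L_j\cap B)\big)\le \gamma\big(\bigcup_{D_j\subset B}L_j\big)$ is free, since $\bigcup_{D_j\subset B}(L_j\cap B)\subset\bigcup_{D_j\subset B}L_j$ and $\gamma$ is monotone. So the whole content of the lemma is the reverse estimate
$$\gamma\Big(\bigcup_{D_j\subset B}L_j\Big)\le C\,\gamma\Big(\bigcup_{D_j\subset B}(L_j\cap B)\Big),$$
and the plan is to prove it by taking a near-extremal measure on the large set, redistributing its mass \emph{inside each disc} onto $L_j\cap B$, and then transferring $L^2$-boundedness of the Cauchy operator by Theorem \ref{comp}. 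Throughout I write $\Sigma:=\bigcup_{D_j\subset B}L_j$ and $\Sigma':=\bigcup_{D_j\subset B}(L_j\cap B)$, and I use the comparability of $\gamma$ with the positive capacity $\gamma_+$ and its $L^2$ description (Tolsa, see \cite{Tolsa-book}).

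First I would fix, via Tolsa's theorem, a positive measure $\mu$ supported on $\Sigma$ with linear growth $\mu(D(x,r))\le r$, with $\|\mathcal{C}_\mu\|_{L^2(\mu)}\le C$, and with $\mu(\Sigma)\ge c\,\gamma(\Sigma)$. Since $20D_j$ are disjoint the discs $2D_j$ are pairwise disjoint, and as $\supp\mu\subset\Sigma\subset\bigcup_j 2D_j$ I may split $\mu=\sum_{D_j\subset B}\mu_j$ with $\mu_j:=\mu|_{L_j}$ and $m_j:=\mu_j(L_j)=\mu(2D_j)$, so that $\sum_{D_j\subset B}m_j=\mu(\Sigma)\ge c\,\gamma(\Sigma)$. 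Because $\mu_j$ is a restriction of $\mu$, it still has linear growth $\le 1$ and $\|\mathcal{C}_{\mu_j}\|_{L^2(\mu_j)}\le\|\mathcal{C}_\mu\|_{L^2(\mu)}\le C$; hence Tolsa's bound gives the crucial per-disc mass control $m_j\lesssim\gamma_+(L_j)\sim\gamma(L_j)$. (Note that the trivial growth bound $m_j\lesssim r_j$ is far too weak here, since $\gamma(L_j)$ may be much smaller than $r_j$.)

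Next I transfer each $\mu_j$ onto $L_j\cap B$. For every $j$ with $D_j\subset B$ the central cross of $L_j$ lies in $D_j\subset B$, so at least one cross is in $B$; by Lemma \ref{discinside} and Lemma \ref{crossB} this yields $\gamma(L_j\cap B)\sim H^1(L_j\cap B)\sim\gamma(L_j)$. Combining with the previous step, $m_j\lesssim H^1(L_j\cap B)$, so the measure $\sigma_j:=\dfrac{m_j}{H^1(L_j\cap B)}\,H^1|_{L_j\cap B}$ is well defined, has density at most an absolute constant, total mass $m_j$, linear growth $\lesssim 1$, and — because $L_j\cap B$ is a finite union of segments — satisfies $\|\mathcal{C}_{\sigma_j}\|_{L^2(\sigma_j)}\lesssim 1$. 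Setting $\sigma:=\sum_{D_j\subset B}\sigma_j$, a positive measure on $\Sigma'$, I get $\sigma(2D_j)=m_j=\mu(2D_j)$ for all $j$. Both $\mu$ and $\sigma$ live in the union of the discs $2D_j$, whose doubles $4D_j$ are still disjoint, so Theorem \ref{comp} (with $\nu=\mu$) applies and shows that $\sigma$ is a Cauchy operator measure, $\|\mathcal{C}_\sigma\|_{L^2(\sigma)}\le C$.

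Finally $\sigma$ is supported on $\Sigma'$, has linear growth and an $L^2(\sigma)$-bounded Cauchy operator, hence is an admissible competitor in Tolsa's $L^2$ description of analytic capacity, so
$$\gamma(\Sigma')\ge c\,\gamma_+(\Sigma')\ge c\,\sigma(\Sigma')=c\sum_{D_j\subset B}m_j\ge c\,\gamma(\Sigma),$$
which together with the trivial inequality proves the lemma. The step I expect to require the most care is the verification that the assembled measure $\sigma$ has \emph{uniform} linear growth, so that it really is admissible: on scales below $r_j$ this is the density bound on a single $\sigma_j$, while on scales above $r_j$ it must be read off from the growth of $\mu$ via $\sigma(2D_j)=\mu(2D_j)$ and the disjointness of the $4D_j$. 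This bookkeeping, together with the correct invocation of the $\gamma\sim\gamma_+$ comparability and of Theorem \ref{comp}, is where the actual work lies; no super-additivity (hence no hypothesis on the location of the centers) is needed, which is what lets the argument run for the general discs of Theorem \ref{main1}.
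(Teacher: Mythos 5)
Your proof is correct, but it takes a genuinely different route from the paper's. The paper disposes of this lemma in a few lines: it splits the union into the $j$ with $2D_j\subset B$ (for which $L_j\subset B$, so nothing is cut away and the two capacities coincide on that part) and the $j$ with $D_j\subset B$ but $2D_j\not\subset B$; for the latter it observes $L_j\cap B\subset \hat{D}_j$, where the enlarged discs $\hat{D}_j$ have their centers on the circle $\partial B$, so the super-additivity Theorem \ref{superth}, in the form of Corollary \ref{circle}, applies and gives $\gamma(\bigcup L_j\cap B)\gtrsim \sum\gamma(L_j\cap B)\gtrsim\sum\gamma(L_j)\gtrsim\gamma(\bigcup L_j)$, using Lemma \ref{discinside} and semi-additivity. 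You instead run a potential-theoretic transference: a near-extremal measure on the big set, the per-disc mass bound $m_j\lesssim\gamma(L_j)$ via Tolsa's $\gamma\sim\gamma_+$ comparability, redistribution of each $m_j$ onto $L_j\cap B$ with bounded density, Theorem \ref{comp}, and finally the standard mass-to-capacity bound on the small set. This is essentially the technique the paper itself deploys later, in the second-case Lemma \ref{secondcase}, so your argument stays inside the paper's toolkit, just used earlier and for a different purpose. What your route buys: it needs no super-additivity and hence no hypothesis on the location of the centers (the paper's proof relies on the centers of the $\hat{D}_j$ lying on the circle $\partial B$, though that is automatic in this lemma); what it costs: much heavier machinery ($\gamma\sim\gamma_+$, the dualization giving both $\mu(\Sigma)\gtrsim\gamma(\Sigma)$ and $m_j\lesssim\gamma_+(L_j)$, and Theorem \ref{comp}) against the paper's short purely capacitary argument. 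One caution you should make explicit: Theorem \ref{comp} as literally stated, with only per-disc mass comparability, cannot be true (replace $\sigma$ on one disc by a point mass of the same total mass); the result of \cite{NToV1} implicitly requires the localized pieces $\sigma|_{D_j}$ to be uniformly Cauchy operator measures. Your $\sigma_j$ (bounded density with respect to $H^1$ on at most $N+2$ segments) satisfy this, exactly as the paper's $\sigma_g$ does in Lemma \ref{secondcase}, so your invocation is as legitimate as the paper's own, but the verification belongs in the proof.
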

\begin{proof}
$$
\gamma(\bigcup_{D_j\subset B} L_j) \leqslant A (\gamma(\bigcup_{2D_j\subset B}L_j) + \gamma(\bigcup_{2D_j\not\subset B}L_j)).
$$

The first term is the same as $\gamma(\bigcup_{2D_j\subset B}L_j\cap B)$. For the second, we use that $L_j\cap B\subset \hat{D_j}$, and thus we can apply the Theorem \ref{superth}, or rather Corollary \ref{circle}. In fact, (we always assume $D_j\subset B$),
$$
\gamma(\bigcup_{2D_j\not\subset B} L_j\cap B) \geqslant c\sli_{2D_j\not\subset B} \gamma(L_j\cap B)) \geqslant c_1 \sli_{2D_j\not\subset B} \gamma(L_j) \geqslant c_2 \gamma(\bigcup_{2D_j\not\subset B} L_j),
$$
which finishes the proof.
\end{proof}

\begin{lemma}
\label{cutfromboundarydiscs}
Suppose $B$ intersects more than one $D_j$. Then
$$
\gamma(\bigcup_{\stackrel{D_j\not \subset B}{D_j\cap B\not= \emptyset}} L_j)\sim \gamma(\bigcup_{\stackrel{D_j\not \subset B}{D_j\cap B\not= \emptyset}} L_j \cap B).
$$
\end{lemma}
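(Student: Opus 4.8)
The inequality $\gamma\bigl(\bigcup_j (L_j\cap B)\bigr)\le \gamma\bigl(\bigcup_j L_j\bigr)$, where throughout this proof $j$ ranges over the \emph{boundary discs} (those $D_j$ with $D_j\cap B\neq\emptyset$ and $D_j\not\subset B$), is immediate from the monotonicity of analytic capacity. So the whole content is the reverse estimate, and my plan is to run the chain
$$
\gamma\Bigl(\bigcup_j L_j\Bigr)\ \lesssim\ \sum_j\gamma(L_j)\ \lesssim\ \sum_j\gamma(L_j\cap B)\ \lesssim\ \gamma\Bigl(\bigcup_j (L_j\cap B)\Bigr).
$$
The outer-left inequality is the (Tolsa) semiadditivity of analytic capacity (see \cite{Tolsa-book}); the outer-right one is the super-additivity of Corollary \ref{circle}; the middle one is a disc-by-disc comparison. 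Combining the chain with monotonicity yields the claimed $\sim$.

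The geometric heart is the middle inequality, which I would prove termwise by showing $\gamma(L_j)\sim\gamma(L_j\cap B)$ for every boundary disc. This is exactly the conclusion of Lemma \ref{discinside}, once I verify its hypothesis that $B$ meets both $D_j$ and $\C\setminus 10D_j$. This is where the assumption that $B$ meets more than one $D_j$ is used: since $B$ already meets $D_j$, it also meets some $D_m$ with $m\neq j$, and any point $p\in B\cap D_m$ satisfies, by the disjointness of $20D_j$ and $20D_m$,
$$
|p-y_j|\ \ge\ |y_j-y_m|-r_m\ >\ 20r_j+20r_m-r_m\ >\ 10r_j,
$$
so $p\in B\cap(\C\setminus 10D_j)$. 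Thus Lemma \ref{discinside} (equivalently, the Proposition produces a full cross of $L_j$ inside $B$) gives $\gamma(L_j)\sim\gamma(L_j\cap B)$, and summing over the boundary discs produces the middle inequality.

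For the right-hand inequality I would invoke the circle version of the super-additivity theorem. Each boundary disc straddles $\partial B$, so the smallest disc $\widehat{2D_j}$ centered on $\partial B$ and containing $2D_j$ is defined, satisfies $\widehat{2D_j}\subset 6D_j$, and contains $L_j\cap B$ (since $L_j\subset 2D_j$). The discs $\widehat{2D_j}$ have their centers on the circle $\partial B$, and a one-line scaling estimate gives $2\widehat{2D_j}\subset 12D_j$; as the $20D_j$ are disjoint, so are the doubles $2\widehat{2D_j}$. Hence Corollary \ref{circle} applies with $\lambda=2$ to the sets $L_j\cap B\subset\widehat{2D_j}$ and yields $\sum_j\gamma(L_j\cap B)\lesssim \gamma\bigl(\bigcup_j(L_j\cap B)\bigr)$, closing the chain.

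The step I expect to be the main obstacle is the verification feeding Lemma \ref{discinside}: one must be certain that \emph{every} disc genuinely crossing $\partial B$ is small enough compared with $B$ that an entire cross lands inside $B$, and this is exactly what the $20$-separation together with ``$B$ meets at least two $D_j$'' buys us. The same separation is what makes the enlarged discs $\widehat{2D_j}$ admissible for the circle super-additivity; the remaining steps are only monotonicity, semiadditivity, and bookkeeping.
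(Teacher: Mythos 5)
Your proof is correct and follows essentially the same route as the paper's: the paper runs the identical chain in reverse order, using the Proposition to place a full cross $C_j$ of $L_j$ inside $B$ (so that $\gamma(L_j\cap B)\ge\gamma(C_j)\gtrsim\gamma(L_j)$, i.e.\ your Lemma \ref{discinside} step), the enlarged discs $\hat{D_j}$ centered on $\partial B$ together with Corollary \ref{circle} for the super-additivity step, and semiadditivity to reassemble the union. If anything, you are more careful than the paper, which writes $\hat{D_j}$ where (since $L_j\subset 2D_j$) one really needs your $\widehat{2D_j}$, and which leaves the separation check for Corollary \ref{circle} and the hypothesis of Lemma \ref{discinside} implicit.
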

\begin{proof}
Since $B$ intersects more than one $D_j$, we get that if it does intersect any of these discs then it cannot be contained in $20D_j$. Thus, it contains at least one cross from $L_j$.  Call this cross $C_j$. We again take $\hat{D_j}$ and apply the first theorem to get
$$
\gamma(\bigcup_{\stackrel{D_j\not \subset B}{D_j\cap B\not= \emptyset}} L_j \cap B)\geqslant c \sli_{\stackrel{D_j\not \subset B}{D_j\cap B\not= \emptyset}}\gamma(L_j\cap B) \geqslant
$$
$$
\sli_{\stackrel{D_j\not \subset B}{D_j\cap B\not= \emptyset}} \gamma(\mbox{$C_j$}) \geqslant c_1 \sli_{\stackrel{D_j\not \subset B}{D_j\cap B\not= \emptyset}} \gamma(L_j)\geqslant c_2 \gamma(\bigcup_{\stackrel{D_j\not \subset B}{D_j\cap B\not= \emptyset}}L_j),
$$
which finishes the proof.
\end{proof}

Finally, we need the following notation. Denote
$$
F_j=\begin{cases} E_j, & D_j\subset B \\
\emptyset, & D_j\not\subset B. \end{cases}
$$
$$
F=\cup F_j.
$$
We need to consider two cases. Next two lemmata are devoted to these two separate cases.

 We fix a small $\ep$. The choice of smallness will be clear from what follows.
\begin{lemma}[The first case]\label{firstcase}
Suppose $\gamma(F)\leqslant \ep \gamma(E\cap B)$. Then there exists a constant $c$, such that
$$
\gamma(\bigcup L_j \cap B) \geqslant c\sli \gamma(L_j\cap B).
$$
\end{lemma}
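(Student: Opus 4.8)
The plan is to split the sum according to the position of $D_j$ relative to $B$ and to treat the interior and boundary families by completely different mechanisms. I would write
$$
\sum_j\gamma(L_j\cap B)=\sum_{D_j\subset B}\gamma(L_j\cap B)+\sum_{\substack{D_j\cap B\neq\emptyset\\ D_j\not\subset B}}\gamma(L_j\cap B)=:S_{in}+S_{bd},
$$
noting that discs with $2D_j\cap B=\emptyset$ contribute nothing (since $L_j\subset 2D_j$), so that the remaining discs fall into the two families of Lemmas \ref{cutfrominsidediscs} and \ref{cutfromboundarydiscs}. For the boundary family I would argue exactly as in Lemma \ref{cutfromboundarydiscs}: each such $D_j$ meets both $B$ and $\C\setminus B$, so passing to $\widehat{D}_j$ (centered on $\partial B$, and inheriting the separation of the $D_j$ up to adjusting $\lambda$, since $\widehat{D}_j\subset 3D_j$ and the $20D_j$ are disjoint) and applying Corollary \ref{circle} to the sets $L_j\cap B\subset\widehat{D}_j$ gives
$$
\gamma\Big(\bigcup_j L_j\cap B\Big)\ \geq\ \gamma\Big(\bigcup_{D_j\not\subset B}L_j\cap B\Big)\ \geq\ c\,S_{bd}.
$$
Thus $S_{bd}$ is automatically dominated by the left-hand side, with no hypothesis required.

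The real work is the interior sum $S_{in}$, for which no super-additivity is available because the centers of the discs $D_j\subset B$ are scattered through $B$ rather than lying on a curve. Here I would invoke the standing hypothesis of this section, $\gamma(E\cap B')\geq c_0\sum_j\gamma(E_j\cap B')$ for every disc $B'$, together with the first-case assumption $\gamma(F)\leq\ep\,\gamma(E\cap B)$. Since for $D_j\subset B$ the central cross lies in $B$, Lemma \ref{discinside} gives $\gamma(L_j\cap B)\sim\gamma(L_j)\sim\gamma(E_j)=\gamma(E_j\cap B)$, and the standing hypothesis then yields
$$
S_{in}\ \lesssim\ \sum_{D_j\subset B}\gamma(E_j\cap B)\ \leq\ \sum_j\gamma(E_j\cap B)\ \leq\ \tfrac1{c_0}\,\gamma(E\cap B).
$$
It remains to bound $\gamma(E\cap B)$ by $\gamma(\cup_j L_j\cap B)$ in the first case. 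Writing $E\cap B=F\cup\big(\bigcup_{D_j\not\subset B}E_j\cap B\big)$ and using the semiadditivity of analytic capacity (\cite{Tolsa-book}) gives $\gamma(E\cap B)\leq C_T\big(\gamma(F)+\gamma(\bigcup_{D_j\not\subset B}E_j\cap B)\big)$; choosing $\ep$ so that $C_T\ep\leq\tfrac12$ lets me absorb the $\gamma(F)$ term and conclude $\gamma(E\cap B)\leq 2C_T\,\gamma(\bigcup_{D_j\not\subset B}E_j\cap B)$.

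Finally, for each boundary disc $\gamma(E_j\cap B)\leq\gamma(E_j)\sim\gamma(L_j\cap B)$ (again Lemma \ref{discinside}, using that a cross sits in $B$), so a second application of semiadditivity gives $\gamma(\bigcup_{D_j\not\subset B}E_j\cap B)\lesssim\sum_{D_j\not\subset B}\gamma(E_j\cap B)\lesssim S_{bd}\leq\tfrac1c\,\gamma(\cup_j L_j\cap B)$. Chaining these estimates yields $S_{in}\lesssim\gamma(\cup_j L_j\cap B)$, and combined with the boundary bound this proves $\gamma(\cup_j L_j\cap B)\geq c\sum_j\gamma(L_j\cap B)$.

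The main obstacle is exactly the interior term: because super-additivity fails for discs with scattered centers, the estimate cannot be purely local and must route the interior mass through the boundary family, where Corollary \ref{circle} supplies genuine super-additivity. The delicate quantitative point is the absorption step, which forces the threshold $\ep$ to be small relative to the semiadditivity constant $C_T$; this is the only place the defining assumption $\gamma(F)\leq\ep\,\gamma(E\cap B)$ of the first case is used.
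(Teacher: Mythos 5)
Your overall strategy is the paper's: the same interior/boundary split, the same absorption of $\gamma(F)$ by semiadditivity for $\ep$ small, and the same chain routing the interior sum through the standing hypothesis of Theorem \ref{main1} back to the boundary family, which is handled by the hatted discs $\hat{D}_j$ and Corollary \ref{circle}. But as written there are two genuine gaps. First, your opening identity $\sum_j\gamma(L_j\cap B)=S_{in}+S_{bd}$ is false: $L_j$ is \emph{not} contained in $D_j$ --- by construction $N+1$ of its crosses touch $\partial(2D_j)$ --- so there can be indices with $D_j\cap B=\emptyset$ and yet $L_j\cap B\neq\emptyset$, and these terms belong to neither of your two families. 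Your parenthetical dismissal only covers $2D_j\cap B=\emptyset$. The paper devotes the final step of its proof to exactly this third family, applying Theorem \ref{superth} through discs centered on $\partial B$ to get
$$
\gamma\Bigl(\bigcup_{\stackrel{D_j\cap B=\emptyset}{L_j\cap B\not=\emptyset}} L_j\cap B\Bigr)\ \geqslant\ \sli_{\stackrel{D_j\cap B=\emptyset}{L_j\cap B\not=\emptyset}}\gamma(L_j\cap B),
$$
and then combining this lower bound with the one for the other two families. You must either add this step or fold these $j$ into your boundary family (hatting $2D_j$ rather than $D_j$); your argument as stated simply never bounds those terms.

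Second, your claim that for each boundary disc ``a cross sits in $B$'' --- which you need to invoke Lemma \ref{discinside} and conclude $\gamma(E_j\cap B)\leqslant\gamma(E_j)\sim\gamma(L_j\cap B)$ in the closing chain --- does not follow from $D_j$ merely meeting both $B$ and $\C\setminus B$. The Proposition defining $N$ requires $B$ to meet $\C\setminus(10D_j)$, and this can fail for a small $B$ straddling $\partial D_j$ inside $10D_j$. The paper repairs this by first disposing of the trivial case in which $B$ intersects only one $2D_j$ (then the union and the sum each have a single term and there is nothing to prove), and observing that once $B$ meets at least two of the $2D_j$'s, disjointness of the $20D_j$ forces $B$ to leave $10D_j$ whenever it meets $D_j$, so a cross does lie in $B$. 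You need this one-line reduction; without it the comparison $\gamma(L_j)\sim\gamma(L_j\cap B)$ for boundary discs, and hence the last link $\sum_{D_j\not\subset B}\gamma(E_j\cap B)\lesssim S_{bd}$, is unjustified. Both gaps are fixable with the paper's own devices, so your proof is incomplete rather than wrong in strategy --- but the missing third family and the missing two-disc reduction are precisely the points where a blind reading of the construction of $L_j$ (crosses reaching $\partial(2D_j)$, visibility governed by $10D_j$) bites.
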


\begin{proof}
For simplicity by $(*)$ we denote the string
$$
D_j\cap B \not= \emptyset \; \mbox{and} \; D_j\not\subset B.
$$

Suppose $B$ intersects only one $2D_j$. Then the $\cup$ and the $\sum$ have only one term, and there is nothing to prove. So, we can assume that $B$ intersects at least two of $2D_j$'s.
Notice also that by this assumption, if $B$ intersects $D_j$ then at least one cross gets inside $B$.
By sub-additivity of $\gamma$
$$
\gamma(\bigcup_{(*)} L_j) \leqslant A\sli_{(*)} \gamma(L_j)\,.
$$
Then using Lemma \ref{discinside} and Corollary \ref{circle} we get

\begin{equation}
\label{zero}
 \sli_{(*)} \gamma(L_j)\leqslant A_1 \sli_{(*)}\gamma(L_j\cap B) \leqslant A_2 \gamma(\bigcup_{(*)}L_j \cap B)\,.
 \end{equation}

On the other hand, by the assumption of the Theorem \ref{main1}
\begin{equation}
\label{star}
\sli_{D_j\subset B}\gamma(L_j) \leqslant c_2\sli_{D_j\subset B}\gamma(E_j) \leqslant c_2 \sli_{\mbox{all}\; j} \gamma(E_j\cap B) \leqslant c_3 \gamma(E\cap B).
\end{equation}

Also,
$$
\gamma(E\cap B) \leqslant A (\gamma(F)+\gamma(\bigcup_{(*)} E_j \cap B)) \leqslant \ep A \gamma(E\cap B) + A\gamma(\bigcup_{(*)} E_j \cap B)).
$$
Thus, if $\ve$ is small enough, we have
\begin{equation}
\label{twostar}
\gamma(E\cap B)\leqslant C \gamma(\bigcup_{(*)} E_j \cap B)).
\end{equation}
Therefore, combining \eqref{star},  \eqref{twostar}, and \eqref{zero}, we obtain
\begin{equation}
\label{3star}
\sli_{D_j\subset B}\gamma(L_j)\leqslant c_4 \gamma(\bigcup_{(*)} E_j \cap B)) \leqslant c_5 \sli_{(*)}\gamma(E_j\cap B) \leqslant c_6 \sli_{(*)}\gamma(L_j) \end{equation}
$$
\leqslant c_7 \sli_{(*)}\gamma(L_j\cap B)\leqslant c_8 \gamma(\bigcup_{(*)} L_j\cap B).
$$

Now combine \eqref{zero} and \eqref{3star} to get
$$
\gamma(\bigcup_{} L_j\cap B) \ge \gamma(\bigcup_{(*)} L_j\cap B) \ge  c \,\sli_{D_j\subset B}\gamma(L_j)  + c \, \sli_{(*)} \gamma(L_j)\,.
$$

Moreover,
$$
\gamma(\bigcup_{} L_j\cap B) \ge \gamma(\bigcup_{\stackrel{D_j\cap B=\emptyset}{L_j\cap B\not=\emptyset}} L_j\cap B).
$$

For these $j$'s we again consider the enlarged discs $\hat{D_j}$. By the Theorem \ref{superth} we get
$$
\gamma(\bigcup_{\stackrel{D_j\cap B=\emptyset}{L_j\cap B\not=\emptyset}} L_j\cap B) \ge \sli_{\stackrel{D_j\cap B=\emptyset}{L_j\cap B\not=\emptyset}}\gamma(L_j\cap B),
$$
which finishes the proof.
\end{proof}

\begin{lemma}[The second case]\label{secondcase}
Suppose that $\gamma(F)\geqslant \ep \gamma(E\cap B)$. Then again
$$
\gamma(\bigcup L_j \cap B) \geqslant c\sli \gamma(L_j\cap B).
$$
\end{lemma}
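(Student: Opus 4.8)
The plan is to split the index set into the three geometric groups that already appear in the proof of Lemma \ref{firstcase}: the \emph{inside} discs $D_j\subset B$, the \emph{boundary} discs satisfying $(*)$ (that is, $D_j\cap B\neq\emptyset$ and $D_j\not\subset B$), and the \emph{far} discs with $D_j\cap B=\emptyset$ but $L_j\cap B\neq\emptyset$. For the boundary and far groups the argument is verbatim the same as in the first case and does \emph{not} use any hypothesis on $\gamma(F)$: one passes to the enlarged discs $\hat D_j$ centered on $\partial B$ and invokes Corollary \ref{circle}, obtaining $\gamma\big(\bigcup L_j\cap B\big)\gtrsim \sum\gamma(L_j\cap B)$ over each of these two groups separately. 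Hence the entire content of the second case is to control the inside discs, and here the assumption $\gamma(F)\geq \ep\,\gamma(E\cap B)$ is precisely what is needed.

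For the inside discs I would first record a cheap upper bound for their contribution. Since $L_j\subset 2D_j$, $\gamma(L_j)\sim\gamma(E_j)$, and $D_j\subset B$ gives $E_j\cap B=E_j$, the hypothesis of Theorem \ref{main1} combined with the case assumption $\gamma(E\cap B)\le\ep^{-1}\gamma(F)$ yields
$$\sum_{D_j\subset B}\gamma(L_j\cap B)\le \sum_{D_j\subset B}\gamma(L_j)\lesssim \sum_{D_j\subset B}\gamma(E_j)=\sum_{D_j\subset B}\gamma(E_j\cap B)\le \sum_{j}\gamma(E_j\cap B)\le \tfrac{1}{c_0}\gamma(E\cap B)\le \tfrac{1}{c_0\ep}\gamma(F).$$
So it suffices to prove the matching lower bound $\gamma(F)\lesssim \gamma\big(\bigcup_{D_j\subset B}L_j\cap B\big)$. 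By Lemma \ref{cutfrominsidediscs} the right-hand side is comparable to $\gamma\big(\bigcup_{D_j\subset B}L_j\big)$, so everything reduces to showing $\gamma(F)=\gamma\big(\bigcup_{D_j\subset B}E_j\big)\lesssim \gamma\big(\bigcup_{D_j\subset B}L_j\big)$.

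The heart of the matter, and the step I expect to be the main obstacle, is this comparison of the capacity of $\bigcup E_j$ with that of $\bigcup L_j$ when the pieces sit in the well-separated discs $2D_j$ and have comparable individual capacities $\gamma(E_j)\sim\gamma(L_j)\sim H^1(L_j)$. I would carry it out by transporting an extremal measure through Theorem \ref{comp}. Using Tolsa's equivalence $\gamma\sim\gamma_+$, choose a measure $\sigma$ supported on $\bigcup_{D_j\subset B}E_j$ with $\|\mathcal{C}_\sigma\|_{L^2(\sigma)}\lesssim 1$, with linear growth $\lesssim 1$, and with $\sigma\big(\bigcup E_j\big)\sim\gamma(F)$. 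Put $m_j=\sigma(E_j)$; restricting $\sigma$ to $E_j$ does not increase the $L^2$ Cauchy norm, so $\sigma|_{E_j}$ is admissible for $\gamma_+(E_j)$ and therefore $m_j\lesssim\gamma(E_j)\sim H^1(L_j)$. Now set $\lambda_j=\frac{m_j}{H^1(L_j)}H^1|_{L_j}$ and $\lambda=\sum_{D_j\subset B}\lambda_j$; each $\lambda_j$ is a density $\lesssim 1$ against arclength on the union of segments $L_j$, so $\lambda$ has linear growth $\lesssim 1$. Applying Theorem \ref{comp} with the discs $2D_j$ (whose doubles $4D_j$ stay disjoint because $20D_j$ are disjoint), we have the exact matching $\lambda(2D_j)=m_j=\sigma(2D_j)$, and since $\sigma$ is a Cauchy operator measure we conclude that $\lambda$ is one as well. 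Because a measure of linear growth with bounded Cauchy transform is admissible (up to a constant) for $\gamma_+$ of its support, we get $\gamma\big(\bigcup L_j\big)\gtrsim\gamma_+\big(\bigcup L_j\big)\gtrsim\lambda\big(\bigcup L_j\big)=\sum m_j=\sigma\big(\bigcup E_j\big)\sim\gamma(F)$, which is exactly the comparison sought.

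Combining the three groups then finishes the proof. The inside discs give $\sum_{D_j\subset B}\gamma(L_j\cap B)\lesssim\gamma(F)\lesssim\gamma\big(\bigcup L_j\cap B\big)$, while the boundary and far discs give $\gamma\big(\bigcup L_j\cap B\big)\gtrsim\sum\gamma(L_j\cap B)$ over those groups by Corollary \ref{circle}. Adding the three estimates yields $\sum_j\gamma(L_j\cap B)\lesssim\gamma\big(\bigcup_j L_j\cap B\big)$, as claimed. The only delicate points are the appeal to Tolsa's theorem to produce and to recognize $\gamma_+$-admissible measures, and the verification that restriction to $E_j$ controls $m_j$; both become routine once one commits to working with $\gamma_+$ and Theorem \ref{comp}.
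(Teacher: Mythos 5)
Your proof is correct, and at its core it is the same strategy as the paper's: reduce to the discs $D_j\subset B$ (the boundary and far groups being handled exactly as in Lemma \ref{firstcase} via the enlarged discs $\hat D_j$ and Corollary \ref{circle}, which indeed never used the case hypothesis), then transport the extremal measure of $F$ onto the crosses using Theorem \ref{comp}, and finish with Lemma \ref{cutfrominsidediscs} and $\gamma(L_j\cap B)\lesssim H^1(L_j)$.

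The one genuine difference is how you make Theorem \ref{comp} applicable. The paper compares the extremal measure $\nu$ with the \emph{unweighted} arclength $\sum\chi_{L_j}dH^1$, which forces a Tchebyshev-type selection of ``good'' indices with $\|\nu_j\|\geqslant\tau\ell_j$, and it uses the superadditivity input \eqref{reduction} to show the good indices carry a fixed fraction of $\sum\ell_j$. You instead weight the arclength, $\lambda_j=\frac{m_j}{H^1(L_j)}H^1|_{L_j}$, so that $\lambda(2D_j)=\sigma(2D_j)$ exactly; this makes the good/bad dichotomy unnecessary, and you then spend the case hypothesis and superadditivity only on the cheap chain $\sum_{D_j\subset B}\gamma(L_j\cap B)\lesssim\gamma(E\cap B)\leqslant\ep^{-1}\gamma(F)$. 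That is a legitimate and arguably cleaner reorganization. One caveat: your justification of the linear growth of $\lambda$ (``density $\lesssim 1$ against arclength, hence linear growth'') is not sufficient as stated, because $H^1$ restricted to the union of \emph{all} crosses need not have linear growth --- many small separated discs packed into $D(x,r)$ can carry total arclength $\gg r$. The growth of $\lambda$ is nevertheless true in your setup, but it must be derived from the exact matching $\lambda(2D_j)=\sigma(2D_j)$ together with the linear growth of $\sigma$ (for the discs with $r_j\leqslant r$), plus the bounded density inside each single $2D_j$ (for the $O(1)$ large discs meeting $D(x,r)$). Note that growth of the transported measure is genuinely needed for the step $\gamma_+(\bigcup L_j)\gtrsim\|\lambda\|$, since a bounded Cauchy operator alone does not force growth (a point mass is a trivial counterexample); in the paper the analogous growth of $\sigma_g$ is exactly what the good-index condition $\|\nu_j\|\geqslant\tau\ell_j$ silently secures. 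With that one-line repair your argument is complete.
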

\begin{proof}
By Theorem \ref{superth} or rather Corollary \ref{circle} we need only to prove
\begin{equation}
\label{inside}
\gamma(\bigcup_{D_j\subset B} L_j\cap B) \geqslant c \sli_{D_j\subset B}\gamma(L_j\cap B).
\end{equation}

We are given that
\begin{equation}\label{reduction}
\gamma(F) \geqslant \ep c \sli \gamma(E_j \cap B) \geqslant c_1 \sli \gamma(F_j).
\end{equation}
In the last inequality we just trow away $j$'s such that $D_j\not\subset B$. Thus, we just forget about all $j$'s for which $D_j\not\subset B$.

By $\nu$ we denote the measure on $F$ that gives $\gamma(F)$. Denote $d\nu_j = \chi_{F_j}d\nu$. Then $\mathcal{C}_{\nu_j}$ is bounded on $L^2(\nu_j)$, and thus
$$
\|\nu_j\| \leqslant c \gamma(F_j) \asymp c_1 \gamma(L_j) \asymp c_2 H^1(L_j) =: c_2 \ell_j.
$$

We call $j$ {\bf good} if $\|\nu_j\| \geqslant \tau \ell_j$. The choice of $\tau$ will be clear from the next steps. We have:
$$
c\sli \gamma(F_j) \leqslant \gamma(\bigcup F_j) =\|\nu\| = \sli \|\nu_j\|
$$
$$
\leqslant c_2 \sli_{j \; \mbox{is good}} \ell_j + \tau \sli \ell_j \leqslant c_2\sli_{j \; \mbox{is good}} \ell_j +c_3\tau\sli \gamma(F_j).
$$
Therefore,
$$
\sli\ci{j \; \mbox{is good}} \ell_j \geqslant c \sli \gamma(F_j)\geqslant c_1 \sli \ell_j.
$$
We call
$$
d\sigma_g := \sli_{j \; \mbox{is good}} \chi_{L_j} dH^1, \,\, d\nu_g := \sli_{j \; \mbox{is good}} \nu_j\,.
$$

Then for good $j$, $\sigma_g(D_j) \sim H^1(L_j) \sim \nu_g(D_j)$.  We use now Theorem \ref{comp}. We get that since $\mathcal{C}{\nu_g}$ is bounded,  $\mathcal{C}_{\sigma_g}$ is also bounded. Therefore,
$$
\gamma(\bigcup_{j: D_j\subset B} L_j) \geqslant \gamma(\bigcup_{j \; \mbox{is good}} L_j) \geqslant c \|\sigma_g\| \geqslant c_1 \sli_{j \; \mbox{is good}} \ell_j \geqslant c_2 \sli_{j: D_j\subset B} \ell_j \geqslant c_3 H^1(\bigcup_{j: D_j \subset B} L_j).
$$

We are done with \eqref{inside} since in Lemma \ref{cutfrominsidediscs} we have proved that
$$
\gamma(\bigcup_{j: D_j\subset B} L_j) \sim \gamma(\bigcup_{j: D_j\subset B} L_j\cap B)
$$
and clearly for every $j$ such that $D_j\subset B$, we have
$$
H^1(L_j)\sim H^1(L_j\cap B)\sim \gamma(L_j\cap B).
$$
\end{proof}

\section{Finishing the proof of Theorem \ref{main1}}
\label{f1}

The main Theorem of \cite{NV} says:

\begin{theorem}
\label{h1}
Let $L$ be a set of positive and finite measure $\nu=H^1|L$. Then $\mathcal{C}_{\nu}$ is bounded if and only if there exists a finite constant $C_0$ such that
 for any disc $B$, $\nu(B\cap L) \le C_0 \gamma(B\cap L)$.
 \end{theorem}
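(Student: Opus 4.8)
The plan is to prove the two implications separately; the forward (necessity) direction is soft, while the converse (sufficiency) is the substantial one, and I would route it through the non-homogeneous $Tb$ machinery of \cite{NTrV1}. For necessity, suppose $\mathcal{C}_\nu$ is bounded on $L^2(\nu)$. As recalled in the proof of Lemma~\ref{proposuma}, this already yields the bound $\mathcal{C}\colon M(\R^2)\to L^{1,\infty}(\nu)$; I would test it on point masses $\delta_{x_0}$ and use that $\nu=H^1|_L$ is non-atomic to extract linear growth $\nu(B)\lesssim r(B)$. Then, fixing a disc $B$ and setting $\nu_B:=\nu|_{B\cap L}$, the pointwise bound $|\mathcal{C}_{\nu_B,\varepsilon}f|\le|\mathcal{C}_{\nu,\varepsilon}f|$ on $B\cap L$ for $f$ supported in $B\cap L$ shows $\mathcal{C}_{\nu_B}$ is bounded on $L^2(\nu_B)$ with norm $\le\|\mathcal{C}_\nu\|$. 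The standard consequence of Tolsa's theorem $\gamma\sim\gamma_+$ — that a positive measure of linear growth with $L^2$-bounded Cauchy transform has mass comparable to the $\gamma_+$-capacity of its support — then gives $\nu(B\cap L)=\|\nu_B\|\lesssim\gamma_+(B\cap L)\sim\gamma(B\cap L)$, which is the asserted inequality.

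For sufficiency assume $\nu(B)\le C_0\gamma(B\cap L)$ for every disc $B$. Linear growth is immediate since $\gamma(B\cap L)\le\diam(B\cap L)\le 2r(B)$. Invoking $\gamma\sim\gamma_+$ once more, the hypothesis reads $\nu(B\cap L)\lesssim\gamma_+(B\cap L)$, so on each disc there is a competitor $\mu_B$ supported on $B\cap L$, of linear growth $\le 1$, with $L^2$-bounded Cauchy transform and $\mu_B(B\cap L)\sim\nu(B\cap L)$. The strategy is to use these competitors as the source of an accretive system and apply the local $Tb$ theorem: it suffices to produce, for each cube $Q$ of an averaged dyadic lattice, a function $b_Q$ supported on $Q$ with $\int_Q b_Q\,d\nu\gtrsim\nu(Q)$, $\|b_Q\|_{L^\infty(\nu)}\lesssim 1$, and with $\mathcal{C}_\nu(b_Q\,\nu)$ and its adjoint under control on $Q$.

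An equivalent route, perhaps better adapted to the one-dimensional set $L$, would go through the Melnikov--Verdera curvature identity: for a measure of linear growth, $L^2$-boundedness of $\mathcal{C}_\nu$ is equivalent to the local estimate $c^2(\nu|_B)\lesssim\nu(B)$ for all $B$, and one would try to deduce this Carleson-type curvature bound from the capacity hypothesis, using that the $\gamma_+$-extremal competitors already carry small curvature, $c^2(\mu_B)\lesssim\|\mu_B\|\lesssim\nu(B)$.

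The hard part, in either formulation, is the same deep step: the competitor $\mu_B$ lives on $B\cap L$ but need not be absolutely continuous with respect to $\nu$, and even where it is, its density need not be bounded. Converting ``there exists a good measure of comparable mass on $B\cap L$'' into ``there is a \emph{bounded} $b_Q$ that is good against $\nu$ itself'' — equivalently, transferring the small curvature of $\mu_B$ into a curvature bound for $\nu$ — is the crux, and it must be carried out uniformly across all scales. I expect this transference to absorb essentially all of the work, the remaining $Tb$ bookkeeping (suppressed kernels, the Carleson packing of stopping cubes, and the probabilistic averaging over random dyadic lattices from \cite{NTrV1}) being by now routine.
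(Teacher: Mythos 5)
You should first note that this paper contains no proof of Theorem~\ref{h1} at all: it is quoted verbatim as ``the main Theorem of \cite{NV}'' and used as a black box, so the only honest comparison is with the proof in \cite{NV} (which, like Section~\ref{super} here, is built around Marcinkiewicz-function/iteration arguments tailored to the one-dimensional structure of $\nu=H^1|L$). Your necessity direction is correct, and in fact slightly over-engineered: once you have $\|\mathcal{C}_{\nu_B}\|_{L^2(\nu_B)}\le\|\mathcal{C}_\nu\|$ and linear growth, the dual characterization of $\gamma_+$ gives $\nu(B\cap L)\lesssim\gamma_+(B\cap L)$, and you only need the trivial inequality $\gamma_+\le\gamma$, not Tolsa's deep comparability. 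The problem is the sufficiency direction, which you present as a strategy with the decisive step explicitly deferred; that deferred step is not ``transference bookkeeping'' --- it is the entire theorem.

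Here is the concrete reason the scheme, as you set it up, cannot close: every ingredient you propose to use --- linear growth of $\nu$, the hypothesis $\nu(B)\lesssim\gamma(B\cap L)\sim\gamma_+(B\cap L)$, the existence of competitors $\mu_B$ on $B\cap L$ with bounded Cauchy transform and comparable mass, and generic local $Tb$ or curvature machinery --- is \emph{also} available for the measure constructed in Section~\ref{sh} of this paper. That measure $\mu$ (Tolsa's example, a weighted sum of $H^1$ on rescaled Garnett sets) has linear growth and satisfies $\mu(B)\le C\gamma(B\cap\supp\mu)$ for all discs $B$, yet $\mathcal{C}_\mu$ is unbounded. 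So any argument of the shape you describe, if completed, would prove a false statement; the crux you flag (converting the competitor $\mu_B$, which need not be absolutely continuous with respect to $\nu$, into a bounded accretive $b_Q$, or its curvature $c^2(\mu_B)\lesssim\nu(B)$ into a bound on $c^2(\nu|_B)$ --- and note curvature is in no way monotone between mutually singular measures of comparable mass) is provably impossible without injecting the structural hypothesis $\nu=H^1|L$, which your outline never uses beyond non-atomicity. A correct proof must exploit that structure in an essential, quantitative way at every scale, as \cite{NV} does; as written, your proposal is an accurate map of where the difficulty lies, but not a proof.
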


Starting with super-additivity of $\{E_j\}$ (the local one, uniform in arbitrary $B$) we conclude that the same  local super-additivity holds for crosses $\{L_j\}$. Then measure $\nu:=H^1| L$, where $L:=\cup_j L_j$, satisfies this theorem. So the boundedness of Cauchy integral on the union of crosses is obtained. Now we use Theorem \ref{comp} again to conclude the boundedness of $\mathcal{C}_{\mu}$ in $L^2(\mu)$.

\section{Proof of Theorem \ref{main2}}
\label{pr2}
We are going to prove analogs of lemmas \ref{firstcase} and \ref{secondcase}.
\begin{lemma}[The analog of the first case, \ref{firstcase}]
Suppose $\gamma(F)\leqslant \ep \gamma(E\cap B)$. Then there exists a constant $c$, such that
$$
\gamma(\bigcup L_j \cap B) \geqslant c\sli \gamma(L_j\cap B).
$$
\end{lemma}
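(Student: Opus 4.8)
The plan is to repeat the argument of Lemma \ref{firstcase} almost verbatim: exactly as there, we may assume that $B$ meets at least two of the discs $2D_j$ (otherwise the union and the sum each reduce to a single term and there is nothing to prove), and we keep the abbreviation $(*)$ for the condition $D_j\cap B\neq\emptyset$ and $D_j\not\subset B$. The whole chain \eqref{zero}, \eqref{twostar}, \eqref{3star}, together with the treatment of the discs satisfying $D_j\cap B=\emptyset$ but $L_j\cap B\neq\emptyset$ via Corollary \ref{circle} and Theorem \ref{superth}, uses only subadditivity of $\gamma$, Lemma \ref{discinside}, the super-additivity of Section \ref{super}, and the case hypothesis $\gamma(F)\le\ep\,\gamma(E\cap B)$. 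The single place where the \emph{stronger} hypothesis of Theorem \ref{main1} was invoked is inequality \eqref{star}, namely $\sli_{D_j\subset B}\gamma(L_j)\le c_3\,\gamma(E\cap B)$. Hence it suffices to reprove \eqref{star} from the hypothesis $\mu(B)\le C_0\,\gamma(E\cap B)$ of Theorem \ref{main2}.

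To do this I would exploit extremality. If $D_j\subset B$, then $E_j\subset D_j\subset B$, so the whole mass of $\mu_j$ lies in $B$ and $\mu_j(B)=\|\mu_j\|\asymp\gamma(E_j)\asymp\gamma(L_j)$, the last comparison being the defining property of the crosses $L_j$. Summing over the interior discs and using positivity of the $\mu_j$ together with the hypothesis of Theorem \ref{main2} yields
\[
\sli_{D_j\subset B}\gamma(L_j)\asymp\sli_{D_j\subset B}\mu_j(B)\le\sli_{j}\mu_j(B)=\mu(B)\le C_0\,\gamma(E\cap B),
\]
which is precisely \eqref{star}, with a constant depending only on the extremality constants and on $C_0$. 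With \eqref{star} at hand, the inequalities \eqref{twostar} and \eqref{3star} follow word for word as in Lemma \ref{firstcase}, and combining them with \eqref{zero} and the super-additive estimate for the far discs gives $\gamma(\bigcup L_j\cap B)\ge c\,\sli\gamma(L_j\cap B)$, as required.

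The point I expect to need the most care is verifying that the boundary discs (those satisfying $(*)$) never require the global measure hypothesis. For them $L_j\cap B$ is contained in the enlarged disc $\hat{D_j}$, whose center lies on $\partial B$ and which satisfies $\hat{D_j}\subset 3D_j$; since the $\hat{D_j}$ inherit a fixed separation from the disjointness of the $20D_j$, the super-additivity of Corollary \ref{circle} applies and lets us pass between $\sli_{(*)}\gamma(L_j\cap B)$ and $\gamma(\bigcup_{(*)}L_j\cap B)$ with no appeal to $\mu$ whatsoever. Thus the sole role of the condition $\mu(B)\le C_0\,\gamma(E\cap B)$ is to control the interior discs through extremality, exactly as above, and no other step of the proof of Lemma \ref{firstcase} has to be altered.
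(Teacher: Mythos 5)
Your proposal is correct and matches the paper's proof essentially verbatim: the authors likewise observe that \eqref{star} is the only step of Lemma \ref{firstcase} that used the hypothesis of Theorem \ref{main1}, and they re-derive it through the same extremality chain $\sli_{D_j\subset B}\gamma(L_j)\leqslant c_2\sli_{D_j\subset B}\gamma(E_j)\leqslant c_3\sli_{D_j\subset B}\mu_j(D_j)=c_3\sli_{D_j\subset B}\mu_j(B)\leqslant c_3\,\mu(B)\leqslant c_4\,\gamma(E\cap B)$, after which the rest is a word-by-word repetition. Your additional check that the boundary discs satisfying $(*)$ never invoke the measure hypothesis is consistent with, though not spelled out in, the paper's argument.
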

\begin{proof}
Notice that the only time we used assumptions of the Theorem \ref{main1} in the proof of the Lemma \ref{firstcase} was when we derived \eqref{star}.
We first show that \eqref{star} holds. Let us show that under our new assumption it holds as well.

In fact,
\begin{multline}
\label{star1}
\sli_{D_j\subset B}\gamma(L_j) \leqslant c_2\sli_{D_j\subset B}\gamma(E_j) \leqslant c_3
\sli_{D_j\subset B} \mu_j(D_j) = \\=c_3 \sli_{D_j\subset B} \mu_j(B) \leqslant
c_3 \sli_{\mbox{all}\; j} \mu_j(B) = c_3\mu(B) \leqslant c_4 \gamma(E\cap B).
\end{multline}
The rest of the proof is a word-by-word repetition of the proof of the Lemma \ref{firstcase}.
\end{proof}

\begin{lemma}[The analog of the second case, \ref{secondcase}]
Suppose that $\gamma(F)\geqslant \ep \gamma(E\cap B)$. Then again
$$
\gamma(\bigcup L_j \cap B) \geqslant c\sli \gamma(L_j\cap B).
$$
\end{lemma}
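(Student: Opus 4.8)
The plan is to mimic the proof of Lemma~\ref{secondcase} verbatim, changing only the single step where the hypothesis of Theorem~\ref{main1} was invoked. In Lemma~\ref{secondcase} the super-additivity assumption enters exactly once, namely to produce the reduction inequality \eqref{reduction}, $\gamma(F)\geqslant c_1\sum_j\gamma(F_j)$; once that inequality is available, the remainder of the argument (introducing the good indices, passing from $\nu_g$ to $\sigma_g$ via Theorem~\ref{comp}, and concluding through Theorem~\ref{h1}) never uses the original hypothesis again. So the whole task reduces to re-deriving \eqref{reduction} from the new assumption $\mu(B)\le C_0\gamma(E\cap B)$.

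To do this I would argue as follows. Since we are in the second case, $\gamma(F)\geqslant\ep\,\gamma(E\cap B)$, and the new hypothesis gives $\gamma(E\cap B)\geqslant C_0^{-1}\mu(B)$; combining the two yields $\gamma(F)\geqslant\tfrac{\ep}{C_0}\mu(B)$. It remains to bound $\mu(B)$ from below by $\sum_j\gamma(F_j)$. For this recall that $F_j=E_j$ precisely when $D_j\subset B$ (and $F_j=\emptyset$ otherwise), and that for such $j$ one has $E_j\subset D_j\subset B$, so $\mu_j(B)=\mu_j(D_j)=\|\mu_j\|$. The extremality assumption $\|\mu_j\|\asymp\gamma(E_j)$ then gives $\mu_j(B)\asymp\gamma(E_j)=\gamma(F_j)$, whence
\[
\mu(B)=\sum_j\mu_j(B)\geqslant\sum_{D_j\subset B}\mu_j(B)\gtrsim\sum_{D_j\subset B}\gamma(F_j)=\sum_j\gamma(F_j).
\]
Chaining this with $\gamma(F)\geqslant\tfrac{\ep}{C_0}\mu(B)$ produces \eqref{reduction} with a new constant $c_1=c_1(\ep,C_0)$, exactly as needed.

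With \eqref{reduction} established, the rest of the proof is a word-by-word repetition of Lemma~\ref{secondcase}: by Corollary~\ref{circle} it suffices to prove the ``inside'' estimate \eqref{inside}; one takes the measure $\nu$ giving $\gamma(F)$, sets $\nu_j=\chi_{F_j}\nu$, calls $j$ \emph{good} when $\|\nu_j\|\geqslant\tau\ell_j$, uses \eqref{reduction} to show the good indices carry a definite proportion of $\sum_j\ell_j$, and then compares $\nu_g$ with $\sigma_g=\sum_{j\text{ good}}\chi_{L_j}\,dH^1$ through Theorem~\ref{comp} to transfer boundedness of the Cauchy operator, giving $\gamma(\bigcup_{D_j\subset B}L_j)\gtrsim H^1(\bigcup_{D_j\subset B}L_j)$.

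The step I expect to be the crux is the lower bound $\mu(B)\gtrsim\sum_j\gamma(F_j)$, i.e.\ the observation that extremality together with $D_j\subset B$ forces $\mu_j(B)=\|\mu_j\|\asymp\gamma(E_j)$; this is precisely what lets the measure-domination hypothesis $\mu(B)\le C_0\gamma(E\cap B)$ substitute for the super-additivity hypothesis of Theorem~\ref{main1}. Everything past \eqref{reduction} is genuinely unchanged, so no new obstruction arises there.
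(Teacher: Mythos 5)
Your proposal is correct and coincides with the paper's own argument: the paper likewise re-derives \eqref{reduction} via the chain $\gamma(F)\geqslant\ep\,\gamma(E\cap B)\gtrsim\mu(B)\geqslant\sum_{D_j\subset B}\mu_j(B)\gtrsim\sum_j\gamma(F_j)$, using exactly your observation that $D_j\subset B$ forces $\mu_j(B)=\|\mu_j\|\asymp\gamma(E_j)$, and then declares the rest a word-by-word repetition of Lemma~\ref{secondcase}. Your write-up just tracks the constants $(\ep, C_0)$ slightly more explicitly.
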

\begin{proof}
Again, the whole proof was based on two facts: \eqref{reduction} and on the further consideration of only those $D_j$'s that are inside $B$.

We start with the estimate:
$$
\gamma(F)\ge \ep \gamma(E\cap B) \ge \ep \mu(B) = \ep \sli_j \mu_j(B) \ge \ep \sli_{D_j \subset B} \mu_j(B) \ge \ep c \sli_{j} \gamma(F_j).
$$
\end{proof}

In the proof of the Lemma \ref{secondcase} we never used the ``super additivity'' except for this place. Therefore, the rest of the proof is again a repetition of the proof of the Lemma \ref{secondcase}.

To finish the proof of the Theorem \ref{main1} we just apply the Theorem \ref{h1} as before.

\section{``Sharpness'' of the Theorem \ref{main2}}
\label{sh}

In this section we show that the condition
\begin{equation}
\label{mug}
\mu(B)\leqslant C \gamma(B\cap E), \; \; \; \forall \; \mbox{ball} \; B
\end{equation}
alone is not enough for the boundedness of $\mathcal{C}_\mu$. Notice that it seems to be the main assumption  \eqref{mainc} of Theorem \ref{main2}. However this assumption alone is not enough for the boundedness of $\mathcal{C}_\mu$, additional conditions on the structure of $\mu$ that seem reasonable are stated in  Theorem \ref{main2}.  In this theorem $\mu$ satisfies \eqref{mug} of course, but in addition $\mu$ consists of  countably many ``separated" pieces, each of which gives a bounded Cauchy operator. Then \eqref{mug} becomes not only necessary, but also a sufficient condition for the boundedness. Without the separation or ``something like that", it is not sufficient.

Let us explain the counterexample shown to us by Xavier Tolsa.
First, we take a square $Q_0 = [0,1]\times [0,1]$. We consider the dyadic sub-squares. Thus, for a natural number $k$ we have $4^k$ sub-squares with sidelength $2^{-k}$. We denote this family by $\mathcal{D}_k = \{Q_k^1, \ldots, Q_k^{4^k}\}$. In every $Q_k^n$ we put the famous Garnett $\frac14$ set. Of course, we need to shrink it and fit into $Q_k^n$. Let us call these sets $G_k^n$. On each $G_k^n$ we define the $1$-Hausdorff measure $d\mu_k^n=\chi_{G_k^n}dH^1$. Then $\|d\mu_{k}^n\| \sim 2^{-k}$. We now denote
$$
\mu = \sli 4^{-k} \mu_k^n.
$$
We first notice that
$$
\|\mu\| \leqslant \sli_k 4^{-k} \sli_{n=1}^{4^k} 2^{-k} \leqslant c.
$$
 Obviously the operator $\mathcal{C}_\mu$ cannot be bounded in $L^2(\mu)$, because otherwise it would be bounded in $L^2(\mu_0)$, where $\mu_0$ is $H^1$ on the initial Garnett set in $Q_0$.

 Notice also that support of $\mu$ is the whole square $Q_0$.
Hence, we need to show that for any ball $B$ it is true that
$$
\mu(B)\leqslant C \gamma(B\cap Q_0).
$$
In fact, let us show this for any square $Q$  instead of $B$ (clearly, it does not matter). Let us first show it for a dyadic square $Q_k^n$. We have
$$
\mu(Q_k^n) \leqslant \sli_{\ell \geqslant k} 4^{-\ell} \sli_{m\colon Q_\ell^m \subset Q_k^n} \mu_\ell^m (Q_\ell^m).
$$
Let us calculate, how many terms we have in the last summation. For $\ell=k$ we have only one term. For $\ell=k+1$ there are exactly $4$ cubes of generation $\ell$ that are in $Q_k^n$. Similarly, on generation $\ell$ there are $4^{\ell-k}$ such cubes. Therefore, we continue the estimate:
$$
\mu(Q_k^n) \leqslant \sli_{\ell\geqslant k} 4^{-\ell}\cdot 4^{\ell-k} \cdot 2^{-\ell} \leqslant c 2^{-k}  \sim \gamma(Q_k^n).
$$
So, our estimate is proved for all dyadic cubes. Given a general cube $Q$ with sidelength $a$, $2^{-k-1}\leqslant a \leqslant 2^{-k}$, we can chose four dyadic cubes of generation $k$, such that $Q$ is inside the union of these cubes. Let us denote them by $Q_k^j$, $j=1,2,3,4$. Then
$$
\mu(Q)\leqslant \sli_{j=1}^4 \mu(Q_k^j) \leqslant 4c 2^{-k} \leqslant 16c \cdot a \sim \gamma(Q).
$$
Thus, our estimate is proved for any cube $Q$, which finishes the example.

\section{Question on superadditivity}
\label{q}

In Theorem \ref{superth} the discs were $\lambda$-separated, where $\lambda>1$. But what if they are just disjoint? Namely, let
$D_j$ be circles with centers on the real line $\R$, such that  it is true that $ D_j \cap  D_k = \emptyset, \; \; j\not=k$. Let $E_j\subset D_j$ be sets. Is it true that then there exists a constant $c=c(\la)$, such that
$$
\gamma(\cup E_j) \geqslant c \sli_j \gamma(E_j)\,?
$$
We cannot either prove or construct a counter-example to this simple claim.

\end{document}